\documentclass[10pt]{amsart}

\usepackage{amsaddr}
\usepackage{bbm}
\usepackage[normalem]{ulem}
\usepackage{mathrsfs}
\usepackage[usenames,dvipsnames,svgnames,table]{xcolor}
\usepackage{graphicx}
\usepackage{tikz-cd}
\usepackage{comment}
\usepackage[pagebackref,colorlinks=true,linkcolor=BrickRed,citecolor=OliveGreen,pdfstartview=FitH]{hyperref}

\title{The spectrum of a solenoid}

\author{Raymond Lei}
\address{Department of Mathematics\\ Indiana University Bloomington\\ 
831 East 3rd St. \\
Bloomington, IN 47405 \\ \tt{leim@iu.edu}}

%\usepackage{MnSymbol}

% Workaround for MnSymbol
\makeatletter
\@ifpackageloaded{MnSymbol}{}{%
  \usepackage{pifont}

}
\makeatother

\makeatletter
\newtheorem*{rep@theorem}{\rep@title}
\newcommand{\newreptheorem}[2]{%
\newenvironment{rep#1}[1]{%
 \def\rep@title{#2 \ref{##1}}%
 \begin{rep@theorem}}%
 {\end{rep@theorem}}}
\makeatother

\newenvironment{customthm}[1]
  {\innercustomthm}
  {\endinnercustomthm}

\newreptheorem{theorem}{Theorem}

\newreptheorem{lemma}{Lemma}
\newtheorem{thm}{Theorem}[section]
\newtheorem{ass}[thm]{Assumption}
\newtheorem{coro}[thm]{Corollary}
\newtheorem{lem}[thm]{Lemma}
\newtheorem{prop}[thm]{Proposition}
\newtheorem{remark}[thm]{Remark}

\newtheorem*{thm*}{Theorem}
\newtheorem*{coro*}{Corollary}

\theoremstyle{definition}

\newtheorem{defn}[thm]{Definition}
\newtheorem{eg}[thm]{Example}

\theoremstyle{remark}

\newcommand{\Pcal}{{\mathcal P}} 
\newcommand{\Gcal}{{\mathcal G}} 
\newcommand{\Ecal}{{\mathcal E}}  
\newcommand{\Hcal}{{\mathcal H}}
\newcommand{\Dcal}{{\mathcal D}}

\newcommand{\Mcal}{{\mathcal M}}
\newcommand{\Bcal}{{\mathcal B}}
\newcommand{\Fcal}{{\mathcal F}}
\newcommand{\Ucal}{{\mathcal U}}
\newcommand{\Ccal}{{\mathcal C}}

\newcommand{\Zbb}{{\mathbb Z}}
\newcommand{\Rbb}{{\mathbb R}}
\newcommand{\Cbb}{{\mathbb C}}
\newcommand{\Nbb}{{\mathbb N}}
\newcommand{\Hbb}{{\mathbb H}}

\begin{document}

\maketitle

%%%%%%%%%%%%%%%%%%%%%%%%%%%%%%%%%%%%%%%%%%%%%%%%%%%%%%%%
\begin{abstract}
    Given a sequence of regular finite coverings of complete Riemannian manifolds, we consider the covering solenoid associated with the sequence. We study the leaf-wise Laplacian on the covering solenoid. The main result is that the spectrum of the Laplacian on the covering solenoid equals the closure of the union of the spectra of the manifolds in the sequence. We offer an equivalent statement of Selberg's $1/4$ conjecture.
\end{abstract}

\section*{Introduction}

Consider a sequence of regular finite coverings of complete 
Riemannian manifolds
$X_1\leftarrow X_2\leftarrow X_3\leftarrow\cdots$. We will call the inverse 
limit of this sequence the {\em covering solenoid} of the sequence and will 
denote it by $X_{\infty}$. The covering solenoid has a natural foliation 
whose leaves are Riemannian manifolds. This
allows us to define a self-adjoint leaf-wise Laplacian operator
$\Delta_{X_{\infty}}$ on $X_{\infty}$, see Section 3. 

\begin{reptheorem}{mainTheorem}
The spectrum of $\Delta_{X_{\infty}}$ equals the closure of the union of all Laplace spectra of $X_i$, $i\in\Nbb$.
\end{reptheorem}

Sequences of coverings arise, more generally, from certain inverse systems of finite covers. For instance, consider a connected manifold $X$ such that $\pi_1(X)$ is finitely generated and residually finite. The collection of finite coverings of $X$ is an inverse system. The inverse limit of the system is homeomorphic to the covering solenoid associated with the sequence $X_1\leftarrow X_2\leftarrow X_3\leftarrow\cdots$ where $G_i$ is the intersection of the subgroups of $\pi_1(X)$ of index smaller than or equal to $i$, $X_i$ is $G_i\backslash\tilde{X}$, and $\tilde{X}$ is the universal cover of $X_1$ for each integer $i$. 

Another instance concerns the collection of all congruence covers $X(n):=\Gamma(n)\backslash \Hbb$ of the modular surface $SL_2(\Zbb) \backslash \Hbb$. Let $X(\infty)$ denote the inverse limit of 
$X(\ell(2)) \leftarrow X(\ell(3))\leftarrow X(\ell(4)) \leftarrow \cdots$ where $\ell(k)$ is the least common multiple of $2,\ldots,k$ for each integer $k$. As a corollary of Theorem 4.7,
we can restate Selberg's $1/4$ conjecture concerning the first nonzero eigenvalue, $\lambda_1$, of all congruence covers \cite{Selberg65}
\cite{Bergeron16}:

\begin{customthm}{5.1}\label{eight}
Selberg's 1/4 conjecture is true if and only if the spectrum of $\Delta_{X(\infty)}$ does not intersect $(0,1/4)$.
\end{customthm}

In general, a covering solenoid is an object of some interest.
See, for example,  \cite{McCord65},
\cite{Sullivan93}, \cite{Saric09}, \cite{Clark-Hurder11}. 
The covering solenoid is connected to the study of dynamic systems: 
The 2-adic solenoid arises as a one-dimensional expanding attractor, 
or Smale-Williams attractor, and forms an important example in the 
theory of hyperbolic dynamical systems \cite{Pesin97},\cite{Sullivan74}.

The article is organized as follows.

In section 1, we give the 
definition of $X_{\infty}$ and discuss some special cases.

In section 2, we define a measure on $X_{\infty}$. This measure is natural 
in terms of the following: In the case where each $X_i$ is a locally compact abelian group,
$X_{\infty}$ is a locally compact abelian group and the measure defined on $X_{\infty}$
is a Haar measure. With this measure, we study the $L^2$ space 
of $X_{\infty}$.

In section 3, we make use of the fact that $X_{\infty}$ is foliated and define 
a leaf-wise Laplacian. For the case where each $X_i$ is complete, the Laplacian is essentially self-adjoint. We show that the leaf-wise Laplacian on an appropriate domain is also essentially self-adjoint in this case. 

In section 4, we show that $L^2(X_{\infty})$ 
is spanned by pullbacks of Laplacian eigenfunctions on all $X_i$, where each $X_i$ is a closed manifold (Proposition 4.1). This allows us to define a resolution of the identity $E$, of the self-adjoint Laplacian on $X_{\infty}$, with the pullbacks of these eigenfunctions. For a more general case where each $X_i$ is a complete manifold. Let $E^i$ denote the resolution of the identity for Laplacian on $X_i$. We then similarly form a resolution of the identity $E$ of the self-adjoint Laplacian on $X_{\infty}$ as follows: For each Lebesgue measurable set $\omega\subset\Rbb$, $E(\omega)$ is the linear combination of pullbacks of $E^i(\omega)$ acting on corresponding $L^2(X_i)$ subspaces, $i\in\Nbb$ (Theorem 4.6). This leads to the main result Theorem 4.7.

In section 5, we give details explaining how Corollary 5.1
follows from Theorem \ref{mainTheorem}.

%%%%%%%%%%%%%%%%%%%%%%%%%%%%%%%%%%%%%%%%%%%%%%%

\section{Covering Solenoids}

Consider a sequence of regular finite coverings of connected, locally path connected, and semi-locally simply connected spaces 
 $X_1\leftarrow X_2\leftarrow X_3\leftarrow\cdots$. 
For every $i,j\in\Nbb$ with $j>i$,
denote the covering map from $X_j$ to $X_i$ by $P_{j,i}$. 
A sequence of regular finite coverings of connected manifolds is an example of such sequences.

\begin{defn}
The {\em covering solenoid associated with the sequence $X_1\leftarrow X_2\leftarrow X_3\leftarrow\cdots$} is 
defined to be
\[ X_{\infty}~ 
:=~ 
\left\{ 
\left. 
(x_1,x_2,x_3,\ldots)\in\prod X_i\, 
\right|
\,\,
P_{j,i}(x_j)=x_i,\,\,\textrm{for every}\,\,i,j\in\Nbb, j>i \right\}.
\]
\end{defn}

In other words, the covering solenoid of is the inverse limit
of the inverse system  $X_1\leftarrow X_2\leftarrow X_3\leftarrow\cdots$.

The infinite product $\prod X_i$ is equipped with the product topology. 
$X_{\infty}$ is equipped with the subspace topology. 
$X_{\infty}$ is a closed subset of $\prod X_i$. 

Let $P_{\infty,i}$ denote the projection map from $X_{\infty}$ to $X_i$ 
for each $i$. Let $G_i$ denote the fundamental group of $X_i$.
Since the finite covering maps in the sequence are regular coverings, 
$G_1/G_i$ is a finite group for each integer $i$. Let $G_{\infty}$ 
denote the 
inverse limit of the sequence of finite groups $G_1/G_i$, $i\in\Nbb$. 
$G_{\infty}$ is a group.

For each $i$, $G_1/G_i$ is the covering transformation group of $P_{i,1}:X_i\to
X_1$. 
For each $i$, $G_1/G_i$ acts on $X_i$. As a result, $G_{\infty}$ acts on
$X_{\infty}$.

\begin{thm}\cite[Theorem 5.6]{McCord65}
If each $X_i$ is path connected and semi-locally simply connected, then
$P_{\infty,1}: X_{\infty} \to X_1$ is a principal $G_{\infty}$-bundle.
\end{thm}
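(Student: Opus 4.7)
The plan is to establish local triviality by exploiting the semi-local simple connectedness of $X_1$ and then passing to the inverse limit. First I would verify that the coordinate-wise action of $G_{\infty}$ on $X_{\infty}$ is well defined and fiber-preserving: if $(g_i)\in G_{\infty}$ (with $g_i\in G_1/G_i$ compatible under the projections $G_1/G_j\to G_1/G_i$) and $(x_i)\in X_{\infty}$, then setting $(g_i)\cdot (x_i):=(g_i\cdot x_i)$ commutes with every $P_{j,i}$ because the deck transformations intertwine the covering maps. This action preserves the fibers of $P_{\infty,1}$, and on each fiber it is free and transitive because the same holds coordinate-wise for each $G_1/G_i\curvearrowright P_{i,1}^{-1}(x)$ and the fibers of $P_{\infty,1}$ are inverse limits of the fibers of $P_{i,1}$.

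Next I would construct local trivializations. Fix $x\in X_1$. Since $X_1$ is locally path connected and semi-locally simply connected, choose a path-connected open neighborhood $U$ of $x$ such that the inclusion induces the trivial homomorphism $\pi_1(U,x)\to\pi_1(X_1,x)$. For such a $U$ every covering $P_{i,1}$ is trivial over $U$: choosing a compatible system of lifts $\tilde{x}^{(i)}\in X_i$ of $x$ (these exist because the fibers $P_{i,1}^{-1}(x)$ form an inverse system of nonempty finite sets, which automatically has nonempty inverse limit by Mittag-Leffler/K\"onig's lemma), one obtains canonical trivializations
\[
\varphi_i\colon P_{i,1}^{-1}(U)\;\xrightarrow{\;\cong\;}\;U\times G_1/G_i,
\]
sending the lift of a path in $U$ starting at $\tilde{x}^{(i)}$ to the pair consisting of its endpoint in $U$ and the coset labelling the sheet. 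By the compatibility of the basepoint lifts, the $\varphi_i$ intertwine $P_{j,i}$ with $\mathrm{id}_U\times(G_1/G_j\to G_1/G_i)$.

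Now I would take inverse limits. Because inverse limits commute with products over a fixed space, the $\varphi_i$ assemble into a bijection
\[
\varphi_{\infty}\colon P_{\infty,1}^{-1}(U)\;\longrightarrow\; U\times\varprojlim_i G_1/G_i \;=\; U\times G_{\infty}.
\]
A small check shows $\varphi_{\infty}$ is a homeomorphism: open sets in $P_{\infty,1}^{-1}(U)\subset\prod X_i$ in the product topology correspond, via the $\varphi_i$, to products of an open set in $U$ with cylinder sets in $\prod G_1/G_i$, which is exactly the product topology on $U\times G_{\infty}$ with $G_{\infty}$ carrying the inverse limit (profinite) topology. The map $\varphi_{\infty}$ is manifestly $G_{\infty}$-equivariant for the right translation action on the second factor, so $(U,\varphi_{\infty})$ is a principal $G_{\infty}$-bundle chart. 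Covering $X_1$ by such evenly covered $U$'s completes the proof.

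The only real obstacle is the commutation of the inverse limit with the product, together with the verification that the bijection $\varphi_{\infty}$ is a homeomorphism rather than merely a continuous bijection; this requires being careful that the product topology on $X_{\infty}\subset\prod X_i$ induces on the fiber over $U$ exactly the profinite topology on $G_{\infty}$ (rather than, say, the discrete one). Once one writes down basic open neighborhoods of points in $P_{\infty,1}^{-1}(U)$ as finite intersections of preimages of opens in finitely many $X_i$'s, this matches cylinder opens in $U\times G_{\infty}$ and the identification is transparent. Everything else is formal from the definition of the inverse limit action.
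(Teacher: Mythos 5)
Your proof is correct and is essentially the argument behind McCord's Theorem 5.6, which the paper simply cites rather than reproves: you verify that the coordinate-wise $G_{\infty}$-action is well defined, fiber-preserving, and free and transitive on fibers, and you assemble compatible trivializations over a path-connected open $U$ with $\pi_1(U)\to\pi_1(X_1)$ trivial, correctly noting the two genuinely delicate points (nonemptiness of the inverse limit of basepoint fibers, and that the subspace topology on $P_{\infty,1}^{-1}(U)$ matches $U\times G_{\infty}$ with the profinite topology on $G_{\infty}$). I see no gaps.
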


Notice that $G_{\infty}$ preserves each fiber of $P_{\infty,1}$. 

\begin{remark} \label{remk:two-descriptions}
For each integer $i$, there exists a natural bijection $h$ from
the inverse limit of the sequence 
$X_i\leftarrow X_{i+1}\leftarrow X_{i+2}\leftarrow\cdots$ 
to the inverse limit of 
$X_1\leftarrow X_2\leftarrow X_3\leftarrow\cdots$. 
To be specific, the map $h$ is defined
as follows:
\[(x_i,x_{i+1},x_{i+2},\ldots)\mapsto
(P_{i,1}(x_i),P_{i,2}(x_i),\ldots,P_{i,i-1}(x_i),x_i,x_{i+1},x_{i+2},\ldots).\]

As one can check, the bijection $h$ is also a homeomorphism. 
So $\varprojlim_{j\geq i}X_j$ is homeomorphic to 
$\varprojlim_{j\geq 1}X_j=X_{\infty}$.

Theorem 1.2 implies that $\varprojlim_{j\geq i}X_j\to X_i$ is a 
principal fiber bundle. The group $\varprojlim_{j\geq i}G_i/G_j$ acts on $\varprojlim_{j\geq i}X_j$ and preserves each fiber of $\varprojlim_{j\geq i}X_j\to X_i$. Notice that,under $h$, the fiber over each $x \in X_1$ for the bundle
$P_{\infty,1}: X_{\infty} \to X_1$
is the image of the 
finite union of the fibers over points  $ y \in P_{i,1}^{-1}(x)$
for the bundle $P_{\infty,i}: \varprojlim_{j\geq i}X_j \to X_i$. To be specific,

\begin{equation} \label{eqn:fiber-decomposition}
P_{\infty,1}^{-1}(x)~
=~
h \left( \bigcup_{y \in P_{i,1}^{-1}(x)}P_{\infty,i}^{-1}(y)\right).
\end{equation}

Note that the projection map $P_{\infty,i}$ from
$X_{\infty}$ to $X_i$ differs from the projection map from
$\varprojlim_{j\geq i}X_j$ to $X_i$
by the homeomorphism $h$.
Thus we
can and will view 
$P_{\infty,i}:X_{\infty}\to X_i$ as a principal $\varprojlim_{j\geq i}G_i/G_j$-bundle.%
\end{remark}

\begin{eg}
Consider the inverse system of all the finite covers of a manifold $X$ whose fundamental group is finitely generated and residually finite. Note that a group being residually finite implies that the intersection of all the finite index subgroups is trivial. 

The collection of all finite covers of $X$ forms a partially ordered set (inverse system). The partial ordering
$\prec$ is defined as follows: For two arbitrary finite covers $X'$ and $X''$, $X'\prec X''$ if 
$X''$ is a finite cover of $X'$. The partially ordered set is directed because for arbitrary finite covers $X'$ and $X''$, there is a finite cover of $X$ that covers both $X'$ and $X''$.

There exists a sequence of regular finite covers of $X$ such that the associated covering solenoid is homeomorphic to the inverse limit of the system, as follows:

Let $G$ denote the fundamental group of $X$. For each positive integer $i$, let $G_i$ be the
intersection of all the finite index subgroups of $G$
with an index smaller than or equal to $i$. Since
$G$ is finitely generated, for every
$i\in\mathbb{N}$, there are finitely many subgroups of $G$ that are of index $i$ or less. 
Thus $G_i$ is a finite index subgroup of $G$. In particular, $G_1=G$. Since we assumed that $G$ is residually finite,
$\bigcap G_i=id$. Since $G_{i+1}\leq G_{i}$ for every $i$ and $\bigcap G_i=id$, for every finite index subgroup 
$H$ of $G$, there exists an integer $i$ such that $G_i\leq H$. Let $\tilde{X}$ be the universal cover of $X_1$. 
For each finite cover $X'$ of $X$, there exists an integer $i$ such that $G_i$ is a subgroup of the fundamental group of $X'$. So there exists $X_i=\tilde{X}/G_i$
as a cover of $X'$. Therefore the collection of all $X_i$ forms a cofinal subsystem of the
inverse system of all the finite covers of $X$. As a result, the inverse
limit of all $X_i$ is
homeomorphic
to the inverse limit of all the finite covers of $X$
\cite[Lemma 1.1.9]{Ribes-Zalesskii10}.
\end{eg}

%%%%%%%%%%%%%%%%%%%%%%%%%%%%%%%%%%%%%%%%%%%%%%%%%
%%%%%%%%%%%%%%%%%%%%%%%%%%%%%%%%%%%%%%%%%%%%%%%%%
\section{Measure}

In this section, consider a sequence of 
regular finite coverings of connected,
locally path connected, semi-locally simply connected, second countable,
locally compact, and Hausdorff spaces $X_1\leftarrow X_2\leftarrow
X_3\leftarrow\cdots$. The last three
conditions are necessary for the
constructions in this section. Notice that, in this case, $X_{\infty}$ is a locally
compact Hausdorff space \cite[Theorem 5]{Stone79}. Again, 
a sequence of regular finite coverings of
connected manifolds is an example of such sequences.

In this section, we define a measure on the covering solenoid 
$X_{\infty}$ and discuss the $L^2$ space on the covering solenoid with respect to this measure.
To define the measure we use the  principal $G_{\infty}$-bundle
structure of $X_{\infty}$.

We first introduce the notion of Baire sets. The {\em Baire $\sigma$-algebra} of a locally compact Hausdorff topological space is the smallest $\sigma$-algebra containing all compact sets that are countable intersections of open sets. A member of a Baire $\sigma$-algebra is called a
{\em Baire set}.
A {\em Baire measure} is a measure defined on 
the Baire $\sigma$-algebra. For example, a Borel measure on a manifold and a Haar measure on a locally compact second countable topological group are both Baire measures.

In general, a Baire measure on the total space of a principal fiber bundle may be
induced from Baire measures on the base space and the fiber 
so that the measure is locally a product measure \cite{Goetz59}. 
To be more precise, let  $\pi:\mathcal{E}\to\mathcal{B}$ 
be a principal fiber bundle with fiber
$\mathcal{F}$ and group $\mathcal{G}$ where 
$\mathcal{E}$, $\mathcal{B}$ and $\mathcal{F}$ are locally compact. 
Given a $\mathcal{G}$-invariant Baire measure $\nu$ on $\mathcal{F}$, 
define a measure $\nu_b$ on each fiber $\pi^{-1}(b)$
by pushing forward the measure $\nu$ with a local trivialization 
map. Since $\nu$ is $\Gcal$-invariant, the measure $\nu_b$ does 
not depend on the choice of trivialization.

Given a Baire measure $\mu_{\mathcal{B}}$ on $\mathcal{B}$, 
define for each Baire subset $Z$ of
$\mathcal{E}$, 
\begin{equation}\label{product-measure}
\mu_{\mathcal{E}}(Z)
~=~
\int_{\mathcal{B}}\nu_b(Z\cap\pi^{-1}(b))\, d\mu_{\mathcal{B}}(b).
\end{equation}

It follows from Fubini's theorem that $\mu_{\Ecal}$ is a Baire measure.
Moreover, for each local trivialization $\psi$, we have
$\psi_*(\mu_{\Ecal})= \nu \times \mu_{\Bcal}$  \cite[Theorem 1]{Goetz59}.

\begin{defn}
Given a $\sigma$-finite Baire measure $\mu_{\Bcal}$ and a $\mathcal{G}$-invariant Baire measure $\nu$ of finite volume,
a measure $\mu_{\Ecal}$ that satisfies $\psi_*(\mu_{\Ecal})= \nu \times \mu_{\Bcal}$, for each local trivialization $\psi$,
is called the {\em product} of $\mu_{\Bcal}$ and $\nu$.
\end{defn}

\begin{prop}
If $\Bcal$ is second countable, locally compact, and Hausdorff, the product of $\mu_{\Bcal}$ and $\nu$ 
is unique.
\end{prop}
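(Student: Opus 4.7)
The plan is to reduce uniqueness to a local statement and then globalize via second countability. Since $\Bcal$ is second countable, locally compact, and Hausdorff, I would first choose a countable open cover $\{U_n\}_{n\in\Nbb}$ of $\Bcal$ by trivializing neighborhoods for the principal $\Gcal$-bundle $\pi:\Ecal\to\Bcal$, refining to a countable base of precompact $G_\delta$ sets so that each $U_n$ lies in the Baire $\sigma$-algebra. Let $\psi_n:\pi^{-1}(U_n)\to U_n\times\Fcal$ denote the associated local trivializations.

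Next, I would disjointify by setting $V_1=U_1$ and $V_n=U_n\setminus\bigcup_{k<n}U_k$, obtaining a countable partition of $\Bcal$ into Baire sets with $V_n\subset U_n$. The preimages $\pi^{-1}(V_n)$ partition $\Ecal$ into Baire sets, and each $\psi_n$ restricts to a homeomorphism $\pi^{-1}(V_n)\to V_n\times\Fcal$. Suppose $\mu_{\Ecal}^{(1)}$ and $\mu_{\Ecal}^{(2)}$ both satisfy $\psi_{n*}\mu_{\Ecal}^{(j)}=\nu\times\mu_{\Bcal}$ on $U_n\times\Fcal$. Restricting to $\pi^{-1}(V_n)$, we obtain
\[
(\psi_n)_*\!\left(\mu_{\Ecal}^{(j)}\big|_{\pi^{-1}(V_n)}\right)
~=~ (\nu\times\mu_{\Bcal})\big|_{V_n\times\Fcal}
\]
for $j=1,2$. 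The product measure on $V_n\times\Fcal$ is uniquely determined by $\nu$ (finite) and $\mu_{\Bcal}|_{V_n}$ ($\sigma$-finite), so $\mu_{\Ecal}^{(1)}$ and $\mu_{\Ecal}^{(2)}$ agree on every Baire subset of $\pi^{-1}(V_n)$.

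Finally, for any Baire set $Z\subset\Ecal$, countable additivity gives
\[
\mu_{\Ecal}^{(j)}(Z)~=~\sum_{n=1}^{\infty}\mu_{\Ecal}^{(j)}\!\left(Z\cap\pi^{-1}(V_n)\right),
\]
and the individual summands coincide by the previous step, so $\mu_{\Ecal}^{(1)}=\mu_{\Ecal}^{(2)}$. The main point requiring care is measure-theoretic bookkeeping in the Baire category: I need to verify that the chosen $U_n$, the disjointified $V_n$, and their $\pi$-preimages are all Baire, and that the product $\nu\times\mu_{\Bcal}$ is well-defined as a Baire measure on $U_n\times\Fcal$. Both follow from second countability of $\Bcal$ (so that Baire and Borel $\sigma$-algebras behave well on $\Bcal$ and on the trivializations), together with the $\sigma$-finiteness of $\mu_{\Bcal}$ and finiteness of $\nu$ that underwrites Fubini and the uniqueness of the product measure invoked above.
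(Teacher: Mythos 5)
Your proof is correct and follows essentially the same route as the paper: cover $\Bcal$ by countably many trivializing Baire neighborhoods, disjointify, identify both candidate measures with the product measure $\nu\times\mu_{\Bcal}$ on each piece, and sum by countable additivity. The only cosmetic differences are that the paper justifies Baire-measurability of the pieces by citing the equality of Baire and Borel $\sigma$-algebras on second countable locally compact Hausdorff spaces rather than by refining to precompact $G_\delta$ sets, and your disjointification $V_n=U_n\setminus\bigcup_{k<n}U_k$ is the standard (and cleaner) one.
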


\begin{proof}
Suppose $\mu_{+}$ and $\mu_{-}$ are both products of
$\mu_{\Bcal}$ and $\nu$.

Since $\Bcal$ is second countable, there exists a countable
collection $\Ucal$ of open subsets of $\Bcal$ such that any 
open subset of $\Bcal$ can be written
as a union of some subfamily of $\Ucal$. 
In particular, $\Bcal$ can be written
as a union of countably many open sets $\{U_i\}$. 
And $\{U_i\}$ can be chosen
such that there exists a local trivialization on each $U_i$. 
Since $\mu_{+}$ and $\mu_{-}$ are both $\sigma$-finite,
$\{U_i\}$ can be chosen such that each
$U_i$ is of finite $\mu_{\pm}$ measure. Let $W_i$ denote the set
$U_i-(\cup_{j>i}U_j)$. Since $\Bcal$ is locally compact Hausdorff,
Baire $\sigma$-algebra equals Borel $\sigma$-algebra
\cite[p.216]{Folland99}. So each open set $U_i$ is a Baire set.
Furthermore, each $W_i$ is a Baire set. So each $W_i$ 
is $\mu_{\pm}$ measurable and of finite measure.
Notice that $W_i$ is a subset of $U_i$, the restriction of the 
local trivialization 
is still a homeomorphism. So there exists a local trivialization 
on each $W_i$.

Given a Baire set $Z \subset \Fcal$,
we have $\mu_{\pm}(Z) = \sum \mu_{\pm}( \pi^{-1}(W_i) \cap Z)$.
Since $\mu_{+}$ and $\mu_{-}$ are both product of $\mu_{\Bcal}$ and
$\nu$, we have $\mu_{+}(\pi^{-1}(W_i) \cap Z)=\mu_{-}( \pi^{-1}(W_i) \cap Z)$ 
for each $i$. Therefore $\mu_+(Z) = \mu_-(Z)$.
\end{proof}

The definition of $\mu_{\mathcal{E}}$ implies that if $\nu$ is of unit volume,
then $\mu(\pi^{-1}(Y))=\mu_{\mathcal{B}}(Y)$ for every Baire set $Y\subset
\mathcal{B}$.

Notice that a Borel measure on $X_1$ and a Haar
measure on the locally compact second countable topological group $G_{\infty}$ are
both Baire measures.

We apply formula (\ref{product-measure}) to the $G_{\infty}$ principal bundle 
$P_{\infty,1}:X_{\infty}\to X_1$. 
Given a Borel measure $\mu_1$ on $X_1$ and a unit volume Haar measure $\nu$ on $G_{\infty}$, 
there exists a Baire measure on $X_{\infty}$ as follows:

\begin{defn}
For each $x_1\in X_1$, let $\nu_{x_1}$ denote the 
pushforward measure on the fiber $P_{\infty,1}^{-1}(x_1)$. 
The Baire measure $\mu_{\infty}$ associated to the measures $\nu$ and $\mu_1$ is defined 
by $\mu_{\infty}(Z)=\int_{X_{1}} \nu_{x_{1}} (Z\cap P_{\infty,1}^{-1}(x_1))d\mu_1(x_1)$ for each Baire set $Z\subset X_{\infty}$. 
\end{defn}

For each $i$, 
we view the finite covering $P_{i,1}: X_i \to X_1$
as a principal $G_1/G_i$-bundle.
We also apply Definition 2.1 
to this principal bundle to obtain a measure $\mu_i$ on $X_i$. 
In particular, let $\nu_{i}$ be the unit volume Haar measure on the fiber $G_1/G_i$.
There exists a Baire measure $\mu_i$ on $X_i$ that satisfies
$\mu_i(E)=\int_{X_i}\nu_{i,x_1}(E\cap P_{i,1}^{-1}(x_1))d\mu_1(x_1)$
for each Baire set $E\subset X_i$ where $\nu_{i,x_1}$ is the 
pushforward measure on the fiber $P_{i,1}^{-1}(x_1)$ by the 
local trivialization map. 

For each $i$, by viewing $X_{\infty}$ 
as a principal bundle over the base $X_i$, as in Remark 1.3,
we obtain a measure on $X_{\infty}$ associated 
to $\mu_i$ on $X_i$ and the unit volume Haar measure $\nu_i$ on $\varprojlim_{j\geq i}G_i/G_j$.

\begin{lem}
The measure on $X_{\infty}$ induced by $\mu_1$ and $\nu$ equals the measure induced by the measure $\mu_i$ and $\nu_i$ for each $i$.
\end{lem}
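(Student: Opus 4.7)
The plan is to exploit the uniqueness statement of Proposition 2.2. Write $\mu^{(1)}_{\infty}$ for the measure on $X_{\infty}$ built from $(\mu_1,\nu)$ via the principal $G_{\infty}$-bundle $P_{\infty,1}$, and $\mu^{(i)}_{\infty}$ for the measure built from $(\mu_i,\nu_i)$ via the principal $K_i$-bundle $P_{\infty,i}$, where $K_i:=\varprojlim_{j\geq i}G_i/G_j$. By Proposition 2.2, $\mu^{(i)}_{\infty}$ is characterized as the unique Baire measure on $X_{\infty}$ whose pushforward under each local trivialization of $P_{\infty,i}$ is $\mu_i\times\nu_i$. So it is enough to verify that $\mu^{(1)}_{\infty}$ has this product structure with respect to trivializations of $P_{\infty,i}$.

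To do this I would work locally over $X_1$. Fix $x_1\in X_1$ and choose a neighborhood $V\ni x_1$ small enough that both $P_{i,1}$ and $P_{\infty,1}$ trivialize over $V$. Using a common section/path-lifting at $x_1$, the trivializations can be taken compatibly, giving $\phi: P_{i,1}^{-1}(V)\xrightarrow{\sim} V\times G_1/G_i$ and $\psi: P_{\infty,1}^{-1}(V)\xrightarrow{\sim} V\times G_{\infty}$ such that the restriction of $P_{\infty,i}$ becomes $(\mathrm{id}_V,\pi_i)$, where $\pi_i\colon G_{\infty}\to G_1/G_i$ is the canonical quotient. By construction, $\psi_*\mu^{(1)}_{\infty}=\mu_1|_V\times\nu$ and $\phi_*\mu_i=\mu_1|_V\times\nu_{[i]}$, where $\nu_{[i]}$ denotes the uniform (Haar) probability measure on the finite group $G_1/G_i$.

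The main ingredient, and the step I expect to require the most care, is the decomposition of the Haar measure $\nu$ on $G_{\infty}$ with respect to the short exact sequence
\[
1\ \to\ K_i\ \to\ G_{\infty}\ \xrightarrow{\pi_i}\ G_1/G_i\ \to\ 1.
\]
I would regard $\pi_i$ as a principal $K_i$-bundle over the finite discrete space $G_1/G_i$ (trivial as a bundle, but twisted as a group) and apply Definition 2.1 with base measure $\nu_{[i]}$ and fiber measure $\nu_i$. The resulting measure on $G_{\infty}$ is easily seen to be left $G_{\infty}$-invariant and of total mass one, so by uniqueness of Haar measure on the compact group $G_{\infty}$ it must equal $\nu$.

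With this Haar decomposition in hand, assembly is routine. Any local trivialization of $P_{\infty,i}$ over a piece $P_{i,1}^{-1}(V')\subset P_{i,1}^{-1}(V)$ can be arranged, by shrinking, to factor through $\psi$ and a local trivialization of $\pi_i$. Pushing $\mu^{(1)}_{\infty}$ forward through such a composite gives $(\mu_1|_{V'}\times\nu_{[i]})\times\nu_i=\mu_i|_{P_{i,1}^{-1}(V')}\times\nu_i$, which is precisely the local product condition characterizing $\mu^{(i)}_{\infty}$. Proposition 2.2 then yields $\mu^{(1)}_{\infty}=\mu^{(i)}_{\infty}$, proving the lemma.
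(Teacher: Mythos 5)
Your proof is correct, but it is organized differently from the paper's. The paper argues directly from the defining integral formula: using the homeomorphism $h$ of Remark \ref{remk:two-descriptions} and the fiber decomposition \eqref{eqn:fiber-decomposition} (each fiber of $P_{\infty,1}$ over $x$ is the disjoint union of the fibers of $P_{\infty,i}$ over the finitely many points of $P_{i,1}^{-1}(x)$), it asserts the equality of the two iterated integrals
$\int_{X_{i}} \nu_{i,y}(Z\cap P_{\infty,i}^{-1}(y))\, d\mu_i(y)=\int_{X_{1}} \nu_{x}(h(Z)\cap P_{\infty,1}^{-1}(x))\, d\mu_1(x)$
and concludes. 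You instead reduce to the uniqueness statement of Proposition 2.2 and verify the local product structure of $\mu^{(1)}_{\infty}$ with respect to trivializations of $P_{\infty,i}$, isolating as the key ingredient the disintegration of the Haar measure $\nu$ on the profinite group $G_{\infty}$ along $1\to K_i\to G_{\infty}\to G_1/G_i\to 1$, which you justify by invariance and uniqueness of Haar measure on a compact group. The two arguments rest on the same underlying fact---that $\nu$ restricted to a $K_i$-coset is $[G_1:G_i]^{-1}$ times a translate of $\nu_i$---but the paper leaves this implicit in its ``it follows that'' step, whereas you make it explicit. Your route buys a cleaner logical structure (everything funnels through the uniqueness proposition, and the measure-theoretic content is concentrated in one group-theoretic lemma), at the cost of some bookkeeping with compatible trivializations; the paper's route is shorter but asks the reader to supply the Haar disintegration themselves.
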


\begin{proof}
Recall from Remark \ref{remk:two-descriptions} the natural 
homeomorphism $h$ from  $\varprojlim_{j\geq i}X_j$ to $X_{\infty}$.
And recall that the fiber over $x \in X_1$ for the bundle
$P_{\infty,1}: X_{\infty} \to X_1$
is the image under $h$ of the 
finite union of the fibers over points  $y\in P_{i,1}^{-1}(x)$
for the bundle $P_{\infty,k}: \varprojlim_{j\geq i}X_j \to X_k$.
See equation (\ref{eqn:fiber-decomposition}). It follows that
for each Baire set $Z \subset \varprojlim_{j\geq i}X_j$ we have 
\[
\int_{X_{i}} \nu_{i,y} \left(Z\cap P_{\infty,i}^{-1}(y) \right)\,
d\mu_i(y)~
=~
\int_{X_{1}} \nu_{x} \left(h(Z)\cap P_{\infty,1}^{-1}(x) \right)\,
d\mu_1(x).
\]

Since $h$ is a homeomorphism, $h$ is an isomorphism of Baire $\sigma$-algebras. The claim follows.
\end{proof}

\begin{eg}
Suppose that $X_1\leftarrow X_2\leftarrow\cdots$ is a sequence 
of locally compact abelian groups and each covering map 
$P_{j,i}: X_j \to X_i$ 
is a homomorphism.
The covering solenoid associated with the sequence $X_1\leftarrow X_2\leftarrow\cdots$ is a locally compact abelian 
group \cite[Theorem 5]{Stone79}
and in this case the measure of the solenoid is 
a Haar measure.
For example, let $\ell(n)$ denote the least common multiple of
$1,\ldots,n$ and consider 
the case where $X_{\infty}$ is the inverse 
limit of all $X_n=\mathbb{R}/(\ell(n)\cdot\mathbb{Z})$. The covering maps
are homomorphisms between compact abelian groups. The inverse limit $X_{\infty}$ is also a compact
abelian group. Choose a translation-invariant measure $\mu_1$
on $X_1$.
Then, by construction, $\mu_{\infty}$ is translation-invariant. 
$\mu_{\infty}$
is a Haar measure for $X_{\infty}$ in this
case.
\end{eg}

For every $p\geq 1$, let $L^p(X_{\infty})$ denote 
the space of functions $f: X_{\infty} \to \Cbb$
such that $ \int_{X_{\infty}} |f|^p d \mu_{\infty} < \infty$. 
Let $L^p(X_i)$ denote the space of functions $h: X_{i} \to \Cbb$ such that 
$ \int_{X_{i}} |h|^p d \mu_i < \infty$.

\begin{lem}
For each $i$ and for each $h: X_i \to \Cbb$ that is integrable with respect to $\mu_i$, the function $P_{\infty, i}^*(h)$ is integrable with respect to $\mu_{\infty}$
and moreover 
\[  
\int_{X_{\infty}} P_{\infty, i}^*(h)\, d\mu_{\infty}~
=~
\int_{X_i} h\, d \mu_{i}. \]
\end{lem}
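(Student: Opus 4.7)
The plan is a standard measure-theoretic approximation argument, built on top of the product structure of $\mu_{\infty}$ supplied by Lemma 2.3. The starting point is the observation that $P_{\infty,i}^*(h)$ is constant on each fiber of $P_{\infty,i}$ (equal to $h(y)$ on $P_{\infty,i}^{-1}(y)$), and that the fiber measure $\nu_{i,y}$ is a probability measure, since $\nu_i$ is the unit-volume Haar measure on $\varprojlim_{j\geq i}G_i/G_j$.

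First I would invoke Lemma 2.3 to replace $\mu_{\infty}$, originally defined in terms of the bundle $P_{\infty,1}:X_{\infty}\to X_1$, by the equivalent description as the measure associated to $\mu_i$ and $\nu_i$ via the bundle $P_{\infty,i}:X_{\infty}\to X_i$. Then for a Baire set $E\subset X_i$, I would verify the identity on the indicator function $h=\One_E$: set $Z=P_{\infty,i}^{-1}(E)$, observe that
\[
Z\cap P_{\infty,i}^{-1}(y)~=~\begin{cases}P_{\infty,i}^{-1}(y),& y\in E,\\ \emptyset,& y\notin E,\end{cases}
\]
so that formula (\ref{product-measure}) gives
\[
\mu_{\infty}(P_{\infty,i}^{-1}(E))~=~\int_{X_i}\nu_{i,y}(Z\cap P_{\infty,i}^{-1}(y))\,d\mu_i(y)~=~\int_{X_i}\One_E(y)\,d\mu_i(y)~=~\mu_i(E),
\]
which is precisely the desired identity for $h=\One_E$. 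Measurability of $P_{\infty,i}^*(h)$ is immediate because $P_{\infty,i}$ is continuous and Baire and Borel $\sigma$-algebras coincide on these spaces.

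Next I would extend the identity by the standard ladder: by linearity it holds for nonnegative simple Baire functions; by the monotone convergence theorem applied to an increasing sequence of simple functions approximating a given nonnegative Baire-measurable $h$, it passes to all nonnegative measurable $h$; and by splitting $h$ into real and imaginary parts and then into positive and negative parts, it extends to all $h\in L^1(X_i)$, with integrability of $P_{\infty,i}^*(h)$ as a corollary of the nonnegative case applied to $|h|$.

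There is essentially no serious obstacle here: the content of the lemma is exactly the $\sigma$-finite analogue of Fubini applied to the functions constant along fibers. The only subtlety worth flagging is that one must consistently use the description of $\mu_{\infty}$ from the $i$-th level, rather than the $1$st level, which is what Lemma 2.3 buys us for free.
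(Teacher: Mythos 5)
Your proof is correct and takes essentially the same route as the paper's: both reduce to the description of $\mu_{\infty}$ as the fibered product of $\mu_i$ and the unit-volume fiber measure over the base $X_i$ (Lemma 2.3 together with Goetz's Fubini-type formula), use that $P_{\infty,i}^*(h)$ is constant on fibers, and obtain the identity and the integrability via simple functions. The paper merely compresses the approximation ladder into the remark that ``one can check by using simple functions,'' which you have spelled out explicitly.
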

\begin{proof}
Every $\mu_{\infty}$-integrable function $f$ on $X_{\infty}$ 
satisfies the formula 
\[ \int_{X_{\infty}}f d\mu_{\infty}=\int_{X_{i}}\int_{P_{\infty,i}^{-1}(x_i)}f d\nu_{x_{i}} d\mu_i(x_{i})
\] 
see \cite[Formula (6)]{Goetz59}. 
For $f=P_{\infty,i}^*(h)$, one can check by using simple functions that $f$ is integrable. Then 
\begin{align*}
&\quad\int P_{\infty,i}^*(h)d\mu_{\infty}
=\int_{X_{i}}\int_{P_{\infty,i}^{-1}(x_i)}P_{\infty,i}^*(h) d\nu_{x_{i}} d\mu_i(x_{i})\\
&=\int_{X_{i}}h(x_i) \cdot \nu_{x_i}(P_{\infty,i}^{-1}(x_i)) d\mu_i(x_{i})
=\int_{X_{i}}h d\mu_i.
\end{align*}

\end{proof}

\begin{coro}\label{pullback of L^2 is L^2}
For every $p\geq 1$ and every $i\in\mathbb{N}$, if $h \in L^p(X_i)$, then $P_{\infty,i}^*(h) \in L^p(X_{\infty})$ and $$||P_{\infty,i}^*(h)||_{L^p(X_{\infty})}~ =~ ||h||_{L^p(X_i)}. $$
And for each $i\in\mathbb{N}$, we have $P_{\infty,i}^*(L^p(X_i))\subset L^p(X_{\infty})$.
\end{coro}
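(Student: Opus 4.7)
The plan is to deduce the corollary directly from Lemma 2.4 by applying it to $|h|^p$ rather than to $h$ itself. Concretely, given $h \in L^p(X_i)$, I would first observe that $|h|^p$ is a nonnegative measurable function on $X_i$ with finite integral, i.e.\ $|h|^p \in L^1(X_i)$.

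Next, I would use the pointwise identity $|P_{\infty,i}^*(h)|^p = P_{\infty,i}^*(|h|^p)$, which holds because the pullback is given by composition with the continuous map $P_{\infty,i}$ and $|\cdot|^p$ is applied pointwise. Applying Lemma 2.4 to the $\mu_i$-integrable function $|h|^p$, we conclude that $P_{\infty,i}^*(|h|^p)$ is $\mu_\infty$-integrable and
\[
\int_{X_\infty} |P_{\infty,i}^*(h)|^p\, d\mu_\infty
~=~
\int_{X_\infty} P_{\infty,i}^*(|h|^p)\, d\mu_\infty
~=~
\int_{X_i} |h|^p\, d\mu_i.
\]
Taking $p$-th roots of both sides yields $\|P_{\infty,i}^*(h)\|_{L^p(X_\infty)} = \|h\|_{L^p(X_i)}$, which in particular shows $P_{\infty,i}^*(h) \in L^p(X_\infty)$. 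The inclusion $P_{\infty,i}^*(L^p(X_i)) \subset L^p(X_\infty)$ is then an immediate consequence.

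I expect no serious obstacle here: the only small point to verify carefully is that $P_{\infty,i}^*(|h|^p)$ is genuinely measurable with respect to the Baire $\sigma$-algebra on $X_\infty$, which follows from the continuity of $P_{\infty,i}$ together with the fact (used earlier in the section) that on second countable locally compact Hausdorff spaces the Baire and Borel $\sigma$-algebras coincide. Hence the application of Lemma 2.4 is justified, and the norm identity transfers cleanly from the $L^1$ case.
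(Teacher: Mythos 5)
Your argument is correct and is exactly the one the paper intends (the corollary is stated without proof as an immediate consequence of the integration lemma, which is Lemma 2.6 in the paper's numbering, not 2.4): apply that lemma to $|h|^p$ and use $|P_{\infty,i}^*(h)|^p = P_{\infty,i}^*(|h|^p)$. No gaps.
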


Let $\Pcal$ be the collection of sets of the form $P_{\infty,i}^{-1}(E)$ where $i\in\Nbb$
and $E$ is a Borel subset of $X_i$ such that $\mu_i(E)<\infty$.

\begin{lem} \label{lem:sigma-gen}
The Baire $\sigma$-algebra of $X_{\infty}$ is generated by $\Pcal$.
\end{lem}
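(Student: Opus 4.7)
The plan is to prove both inclusions between $\mathrm{Baire}(X_\infty)$ and $\sigma(\Pcal)$. The easy direction $\sigma(\Pcal) \subseteq \mathrm{Baire}(X_\infty)$ follows because $X_\infty$ is second countable, locally compact, and Hausdorff (as noted at the start of Section 2), so its Baire and Borel $\sigma$-algebras coincide; since each $P_{\infty,i}$ is continuous, every element of $\Pcal$ is Borel, hence Baire.

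For the reverse inclusion, since $X_\infty$ is second countable (as a subspace of a countable product of second countable spaces), every open set is a countable union of basic open sets, so it suffices to show every basic open set of $X_\infty$ lies in $\sigma(\Pcal)$. A basic open set $W \subset X_\infty$ inherits its form from the product topology: $W = X_\infty \cap \prod_i U_i$ with $U_i$ open in $X_i$ and $U_i = X_i$ for all but finitely many $i$. Let $i^*$ denote the largest index with $U_{i^*} \neq X_{i^*}$. Using the compatibility condition $P_{i^*, i}(x_{i^*}) = x_i$ for $i \leq i^*$ that defines points of $X_\infty$, every restriction $x_i \in U_i$ collapses to the single condition $x_{i^*} \in P_{i^*, i}^{-1}(U_i)$, so
\[
W = P_{\infty, i^*}^{-1}(V), \qquad V := \bigcap_{i \leq i^*} P_{i^*, i}^{-1}(U_i),
\]
where $V$ is open in $X_{i^*}$. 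Finally, $\sigma$-finiteness of $\mu_{i^*}$ (inherited from $\mu_1$ via the finite-cover construction, since each fiber carries a probability Haar measure) lets me cover $X_{i^*}$ by countably many Borel sets $F_k$ of finite $\mu_{i^*}$-measure, and then $W = \bigcup_k P_{\infty, i^*}^{-1}(V \cap F_k)$ exhibits $W$ as a countable union of elements of $\Pcal$.

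The main technical point to check carefully is the collapse of a cylindrical basic open set to a single preimage $P_{\infty, i^*}^{-1}(V)$ at the finite stage $X_{i^*}$; this is what makes the specific family $\Pcal$ suffice rather than requiring a larger cylinder class with restrictions at several levels. Once that identification is made, second countability assembles general open sets from basic ones, and $\sigma$-finiteness handles the finite-measure requirement in the definition of $\Pcal$, so no deeper measure-theoretic tools are needed.
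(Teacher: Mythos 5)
Your proof is correct, and it takes a more hands-on route than the paper's. The paper argues abstractly: it invokes the standard fact that the Borel $\sigma$-algebra of a countable product of second countable spaces is the product $\sigma$-algebra, i.e.\ is already generated by the single-coordinate cylinders $\pi_i^{-1}(E)$, and then passes to the subspace $\sigma$-algebra on $X_{\infty}$ via $\pi_i^{-1}(E)\cap X_{\infty}=P_{\infty,i}^{-1}(E)$; the inverse-limit compatibility relations play no role in that argument. You instead work directly with the topology of $X_{\infty}$: you take a basic open set with open restrictions in finitely many coordinates and use the relations $P_{i^*,i}(x_{i^*})=x_i$ to collapse all the restrictions to the single top coordinate $i^*$, so the basic open set becomes $P_{\infty,i^*}^{-1}(V)$ for an open $V\subset X_{i^*}$; second countability (hence the Lindel\"of property) then writes every open set as a countable union of such sets. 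Your approach is more elementary and makes explicit why single-stage preimages suffice, at the cost of the small bookkeeping of the collapse; the paper's approach is shorter but outsources the work to two standard lemmas (product Borel $\sigma$-algebras and traces of generated $\sigma$-algebras). Both proofs handle the finite-measure requirement in the definition of $\Pcal$ the same way, by $\sigma$-finiteness of $\mu_{i^*}$ — you spell out the covering by finite-measure sets $F_k$, which the paper leaves implicit.
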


\begin{proof}
Since $X_{\infty}$ is locally compact, Hausdorff, and second countable, 
the
Baire $\sigma$-algebra equals the Borel $\sigma$-algebra on $X_{\infty}$.
The Borel $\sigma$-algebra of $X_{\infty}$ is the subspace $\sigma$-algebra of the Borel $\sigma$-algebra of $\prod X_k$. 

The Borel $\sigma$-algebra of a countable product 
of second countable topological spaces is the product of the Borel
$\sigma$-algebras. So the Borel $\sigma$-algebra of $\prod X_i$ is generated by
$\{\pi_{i}^{-1}(E): i\in\Nbb, E\subset X_i \,Borel\, measurable\}$,
where 
$\pi_i$ is the projection from $\prod X_k$ to $X_i$. 
Since $\pi_i^{-1}(E)\cap X_{\infty}=P_{\infty,i}^{-1}(E)$, the 
subspace Borel 
$\sigma$-algebra of $X_{\infty}$ is generated by 
$\{P_{\infty,i}^{-1}(E): i\in\Nbb, E\subset X_i \,Borel\, measurable\}$. 

For each $i\in\Nbb\cup \{\infty\}$, 
$X_i$ is $\sigma$-finite. So the subspace Borel 
$\sigma$-algebra of $X_{\infty}$ is generated by $\Pcal$.  Equivalently, the Baire $\sigma$-algebra of $X_{\infty}$ is generated by $\Pcal$.
\end{proof}

Let ${\mathbbm 1}_Z$ denote the characteristic function for each set $Z$. 

\begin{prop}\label{approximation in L^2}
For every $p\geq 1$, every $L^p(X_{\infty})$ integrable function can be approximated in $L^p$ norm by sequences of finite linear combinations of functions in the set $\bigcup_{i\in\mathbb{N}} P_{\infty,i}^*(L^p(X_i))$.
\end{prop}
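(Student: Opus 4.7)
The plan is to show $V := \overline{\mathrm{span}\bigcup_i P_{\infty,i}^*(L^p(X_i))} = L^p(X_\infty)$ via a Dynkin $\pi$--$\lambda$ argument. Since simple functions supported on Baire sets of finite $\mu_\infty$-measure are dense in $L^p$ (for $p < \infty$), it suffices to prove $\mathbf{1}_Z \in V$ for every such $Z$.

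First I would set up a $\sigma$-finite decomposition. Since $\mu_1$ is $\sigma$-finite, choose disjoint Borel $V_n \subset X_1$ with $\mu_1(V_n) < \infty$ and $X_1 = \sqcup_n V_n$, and set $W_n := P_{\infty,1}^{-1}(V_n)$. These are disjoint members of $\mathcal{P}$ covering $X_\infty$, with $\mu_\infty(W_n) = \mu_1(V_n) < \infty$ by the unit-volume normalization of the fiber Haar measure noted after Definition 2.3, and $\mathbf{1}_{W_n} = P_{\infty,1}^*(\mathbf{1}_{V_n}) \in V$.

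Next I would verify that $\mathcal{P}$ is a $\pi$-system: for $P_{\infty,i}^{-1}(E), P_{\infty,j}^{-1}(F) \in \mathcal{P}$ with $i \leq j$, the factorization $P_{\infty,i} = P_{j,i}\circ P_{\infty,j}$ gives
\[
P_{\infty,i}^{-1}(E) \cap P_{\infty,j}^{-1}(F) ~=~ P_{\infty,j}^{-1}\bigl(P_{j,i}^{-1}(E) \cap F\bigr),
\]
which still lies in $\mathcal{P}$. Now fix $W = W_n$ and define
\[
\mathcal{M}_W ~:=~ \{Z \text{ Baire}, Z \subset W : \mathbf{1}_Z \in V\}.
\]
This is a Dynkin system on $W$: $W \in \mathcal{M}_W$ by the previous paragraph; it is closed under proper differences by linearity of the indicator; and closed under countable increasing unions by dominated convergence with dominating function $\mathbf{1}_W \in L^p(X_\infty)$, combined with the closedness of $V$. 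Moreover $\mathcal{M}_W$ contains the $\pi$-system $\{P \cap W : P \in \mathcal{P}\}$: by the computation above each such intersection lies in $\mathcal{P}$, so its indicator is of the form $P_{\infty,j}^*(\mathbf{1}_{E'})$ with $\mu_j(E') < \infty$ and hence lies in $V$. By Lemma \ref{lem:sigma-gen}, this $\pi$-system generates the trace of the Baire $\sigma$-algebra on $W$, so Dynkin's theorem yields that $\mathcal{M}_W$ contains every Baire subset of $W$.

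Finally, for a general Baire set $Z$ with $\mu_\infty(Z) < \infty$, write $Z = \sqcup_n (Z \cap W_n)$; each indicator $\mathbf{1}_{Z\cap W_n}$ is in $V$ by the previous step, and the partial sums converge to $\mathbf{1}_Z$ in $L^p$ by dominated convergence (bounded by $\mathbf{1}_Z$), so closedness of $V$ gives $\mathbf{1}_Z \in V$. The main technical point is the $\pi$-system property of $\mathcal{P}$, which crucially uses the inverse-system structure: any two pullback sets can be lifted to a common index $j$ where they become an honest intersection of Borel subsets of $X_j$; without this, the Dynkin argument on each $W_n$ would fail to bootstrap to the full Baire $\sigma$-algebra.
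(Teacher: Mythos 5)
Your proposal is correct and follows the same route as the paper: reduce to indicators of Baire sets via Lemma \ref{lem:sigma-gen} and then pull back indicators of sets in $\Pcal$. The paper compresses the approximation step into ``one can show,'' whereas you supply the missing details — in particular the verification that $\Pcal$ is a $\pi$-system via the factorization $P_{\infty,i}=P_{j,i}\circ P_{\infty,j}$ — which is exactly the point that makes the Dynkin argument go through.
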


\begin{proof}
For each function $f\in L^p(X_{\infty})$,
using Lemma \ref{lem:sigma-gen}, one can show
that $f$ can be approximated, in $L^p$ norm, by finite linear combinations of characteristic functions associated with the sets in $\Pcal$. 
We have ${\mathbbm 1}_{P_{\infty,i}^{-1}(E)}=P_{\infty,i}^*({\mathbbm 1}_{E})$
and so $f$ can be approximated by finite linear combinations of pullbacks of characteristic functions. 
Each such characteristic function 
is an $L^p$ function, by definition of $\mathcal{P}$. The conclusion follows. 
\end{proof}

For each $i\in\mathbb{N}\cup\{\infty\}$, on $L^2(X_i)$ there exists a natural inner product $\langle\cdot,\cdot\rangle_{X_i}:L^2(X_i)\times L^2(X_i)\to \Cbb$ defined as follows: 
\[\langle \alpha,\beta\rangle=\int \alpha\cdot \bar{\beta}d\mu_i\] 
for every $\alpha,\beta\in L^2(X_i)$. Corollary 2.7 gives rise to:

\begin{coro}
For each $i\in\mathbb{N}$, and each $\alpha,\beta\in L^2(X_i)$, 
\[\langle \alpha,\beta\rangle_{X_i}
=\langle P_{\infty,i}^*(\alpha),P_{\infty,i}^*(\beta)\rangle_{X_{\infty}}.\]
\end{coro}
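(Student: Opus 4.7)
The plan is to reduce the claim directly to Lemma 2.6 applied to the integrand $\alpha \cdot \bar\beta$.

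First I would observe that for $\alpha, \beta \in L^2(X_i)$, the Cauchy--Schwarz inequality gives $\alpha\cdot\bar\beta \in L^1(X_i)$, so Lemma 2.6 applies with $h = \alpha\cdot\bar\beta$, yielding
\[
\int_{X_i} \alpha\cdot\bar\beta \, d\mu_i ~=~ \int_{X_\infty} P_{\infty,i}^*(\alpha\cdot\bar\beta)\, d\mu_\infty.
\]
Next I would use the elementary fact that the pullback by a map is an algebra homomorphism that commutes with complex conjugation, so $P_{\infty,i}^*(\alpha\cdot\bar\beta) = P_{\infty,i}^*(\alpha) \cdot \overline{P_{\infty,i}^*(\beta)}$ pointwise on $X_\infty$. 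By Corollary \ref{pullback of L^2 is L^2}, both $P_{\infty,i}^*(\alpha)$ and $P_{\infty,i}^*(\beta)$ lie in $L^2(X_\infty)$, so the right-hand side is precisely $\langle P_{\infty,i}^*(\alpha), P_{\infty,i}^*(\beta)\rangle_{X_\infty}$, while the left-hand side is by definition $\langle \alpha, \beta\rangle_{X_i}$. Combining these gives the claimed identity.

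There is no real obstacle here; the statement is a routine consequence of Lemma 2.6 combined with the multiplicativity of pullback. The only point worth checking with any care is that $\alpha\bar\beta$ is genuinely in $L^1(X_i)$ so that Lemma 2.6 is applicable, which as noted follows from Cauchy--Schwarz.
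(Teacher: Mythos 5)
Your proof is correct. The paper itself gives no written proof of this corollary; it simply asserts that it follows from Corollary \ref{pullback of L^2 is L^2}, i.e.\ from the fact that $P_{\infty,i}^*$ is a linear isometry of $L^2$ norms, which yields the inner product identity via the polarization identity. You instead go back one step further, to Lemma 2.6, and apply the integration formula directly to $h=\alpha\bar\beta$, using that this product lies in $L^1(X_i)$ by Cauchy--Schwarz and that pullback (being precomposition) is multiplicative and commutes with conjugation. Both routes are short and both ultimately rest on the same fiber-integration identity $\int_{X_\infty}P_{\infty,i}^*(h)\,d\mu_\infty=\int_{X_i}h\,d\mu_i$; yours has the minor advantage of not invoking polarization and of making the pointwise identity $P_{\infty,i}^*(\alpha\bar\beta)=P_{\infty,i}^*(\alpha)\overline{P_{\infty,i}^*(\beta)}$ explicit, while the paper's route reuses an already-stated corollary with no extra computation. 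Your one point of care --- verifying $\alpha\bar\beta\in L^1(X_i)$ so that Lemma 2.6 applies --- is exactly the right thing to check, and it holds.
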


%%%%%%%%%%%%%%%%%%%%%%%%%%%%%%%%%%%%%%%%%%%%%%%%%%%%%%%%%%%%%%%%%%%%%%%%%%%%%%%%%%%%%%%%%%%%%%%%%%%%%%%%%%%%%%%%%%%%%%%%%%%%%%%%%%%%%%%%%%%%%%%%%%%%%%%%%%%%
\section{Laplacian}

For the rest of the paper, we will assume that each $X_i$
is a manifold.

Each path connected component of $X_{\infty}$ is called a 
{\em leaf}. In this section, we define a leaf-wise Laplacian 
on the covering solenoid associated to $X_1 \leftarrow X_2 \leftarrow 
\cdots$.

We first apply results from \cite{McCord65}. Each leaf is dense 
in $X_{\infty}$, all leaves of $X_{\infty}$ are homeomorphic to 
each other and the fundamental group of each leaf is $\bigcap_i G_i$,
by section 5 of \cite{McCord65}. In particular, each leaf is a cover of $X_i$, 
for each integer $i$. The fact that each $X_i$ is a manifold implies
that each leaf is a manifold.

Take a complete Riemannian metric $g$ on $X_1$. Pull back the metric to $X_i$ by $P_{i,1}$ for every integer $i$. The finite covering map $P_{j,k}:X_j\to X_k$ is a local isometry for every $j,k\in\mathbb{N}$, $j>k$.

The map $P_{\infty,1}:X_{\infty}\to X_1$ restricted to a fixed leaf $\ell$ is a covering map from $\ell$ to $X_1$.
Pull back the complete Riemannian metric $g$ on $X_1$ and get the metric $(P_{\infty,1}|_{\ell})^*(g)$ on the leaf $\ell$. Then the covering map $P_{\infty,1}|_{\ell}$ becomes a local isometry. Since the Riemannian metric on each $X_i$ is the pullback metric for each integer $i$, the covering map $P_{\infty,i}|_{\ell}$ is also a local isometry.

\begin{defn}
Let $\mathcal{S}$ denote the space of functions 
$u:X_{\infty}\to\Cbb$ such that the the restriction of 
$u$ to each leaf is smooth.
If $u \in \mathcal{S}$ and $x$ belongs to the leaf 
$\ell$, define 
\[ (\Delta_{X_{\infty}}u)(x)~ =~ \Delta_{\ell} u|_{\ell} (x).
\]
where $\Delta_{\ell}$ is the Laplacian defined on $C^{\infty}(\ell)$.
\end{defn}

\begin{prop}
For each integer $i$ and each function $h\in C_c^{\infty}(X_i)$, $P_{\infty,i}^*(h)$ is smooth leaf-wise and 
$\Delta_{X_{\infty}} P_{\infty,i}^*(h)=P_{\infty,i}^*(\Delta_{X_i}h)$.
\end{prop}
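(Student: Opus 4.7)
The plan is to reduce the statement to the classical fact that the Laplacian commutes with local isometries, applied leaf by leaf.

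First I would recall the observation made just before the proposition: once the complete metric $g$ is fixed on $X_1$ and pulled back to each $X_i$, the restriction $P_{\infty,i}|_{\ell}:\ell\to X_i$ of the projection to any leaf $\ell$ is a covering map that is also a local isometry with respect to the pulled-back metric on $\ell$. This is what makes the whole argument work.

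Next I would dispatch the smoothness claim. Fix a leaf $\ell$ and a point $x\in\ell$. Since $P_{\infty,i}|_{\ell}$ is a local isometry, in particular it is a local diffeomorphism, so we can pick a neighborhood $U\subset\ell$ of $x$ mapped diffeomorphically onto an open subset $V\subset X_i$. The restriction of $P_{\infty,i}^*(h)$ to $U$ is $h\circ(P_{\infty,i}|_{U})$, which is smooth because $h\in C_c^\infty(X_i)$. Thus $P_{\infty,i}^*(h)|_\ell\in C^\infty(\ell)$, so $P_{\infty,i}^*(h)\in\mathcal{S}$ and the leafwise Laplacian $\Delta_{X_\infty}P_{\infty,i}^*(h)$ is defined.

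For the commutation identity, fix $x\in\ell$ and choose $U,V$ as above, with $\phi:=P_{\infty,i}|_{U}:U\to V$ an isometry of Riemannian manifolds. The Laplace--Beltrami operator is built pointwise from the metric, so isometries intertwine Laplacians: for every smooth function $h$ on $V$,
\[
\Delta_\ell(h\circ\phi)\big|_U ~=~ (\Delta_{X_i}h)\circ\phi.
\]
Evaluating at $x$ and using Definition 3.1 gives
\[
(\Delta_{X_\infty}P_{\infty,i}^*(h))(x)
~=~\Delta_\ell\bigl(P_{\infty,i}^*(h)\big|_\ell\bigr)(x)
~=~(\Delta_{X_i}h)(P_{\infty,i}(x))
~=~P_{\infty,i}^*(\Delta_{X_i}h)(x).
\]
Since $x\in X_\infty$ and the leaf through it were arbitrary, this proves the identity globally.

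There is essentially no obstacle here: the proposition is the functorial translation of ``local isometries intertwine Laplacians,'' and the only thing to verify is that the leafwise framework of Definition 3.1 is compatible with this. The mildly delicate point is simply that $P_{\infty,i}^*(h)$ need not have any regularity transverse to the foliation, but that is irrelevant because $\mathcal{S}$ and $\Delta_{X_\infty}$ are defined purely leaf by leaf.
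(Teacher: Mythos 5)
Your proof is correct and follows essentially the same route as the paper: both establish leafwise smoothness via the covering-map (local diffeomorphism) property of $P_{\infty,i}|_{\ell}$ and then derive the commutation identity from the fact that local isometries intertwine Laplace--Beltrami operators, concluding leaf by leaf. Your write-up is simply a more explicit, pointwise version of the same argument.
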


\begin{proof}
Since the map $P_{\infty,i}:X_{\infty}\to X_i$ restricted to each leaf $\ell$
is a covering map from $\ell$ to $X_1$, the pullback of smooth function $h$ will
be a smooth function $(P_{\infty,i}|_{\ell})^*(h)$ on $\ell$. Thus
$P_{\infty,i}^*(h)$ is smooth leaf-wise and lies in $\mathcal{S}$. 

Since $P_{\infty,i}|_{\ell}$ is a local isometry, $P_{\infty,i}|_{\ell}$
satisfies that on each small open neighborhood, $(P_{\infty,i}|_{\ell})^*$ commutes with the Laplacian. To be precise, 
\[\Delta_{(P_{\infty,1}|_{\ell})^*(g)}(P_{\infty,i}^*(h)|_{\ell})=P_{\infty,i}^*
(\Delta_{X_i}h)|_{\ell}.\]

Since the choice of leaf $\ell$ is arbitrary and
$\Delta_{X_{\infty}}$ is defined to be taking Laplacian leaf-wise,
\[\Delta_{X_{\infty}} P_{\infty,i}^*(h)=P_{\infty,i}^*(\Delta_{X_i}h).\]
\end{proof}

\begin{coro}
For each integer $i$ and each eigenfunction $h$ of $\Delta_{X_i}$,
$P_{\infty,i}^*(h)$ is an eigenfunction of $\Delta_{X_{\infty}}$ and is of the same eigenvalue as $h$.
\end{coro}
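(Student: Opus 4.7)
The plan is to deduce the statement directly from Proposition 3.2 by applying the intertwining identity $\Delta_{X_{\infty}}\circ P_{\infty,i}^* = P_{\infty,i}^*\circ \Delta_{X_i}$ to an eigenfunction $h$ with $\Delta_{X_i}h=\lambda h$. This gives
\[
\Delta_{X_{\infty}}P_{\infty,i}^*(h)~=~P_{\infty,i}^*(\Delta_{X_i}h)~=~P_{\infty,i}^*(\lambda h)~=~\lambda\, P_{\infty,i}^*(h),
\]
which is exactly the desired eigenfunction relation.

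The only technical point demanding attention is that Proposition 3.2 is stated for $h\in C_c^{\infty}(X_i)$, whereas a Laplacian eigenfunction on a general complete manifold need not have compact support. However, the argument given for Proposition 3.2 uses only two properties of $h$: first, that it is smooth, so that the leaf-wise pullback $(P_{\infty,i}|_\ell)^*(h)$ is again smooth; second, that $P_{\infty,i}|_\ell$ is a local isometry, so that the Laplacian commutes with the pullback pointwise. Neither ingredient uses compactness of support. Since the Laplacian is a second-order elliptic operator with smooth coefficients, any Laplacian eigenfunction is automatically smooth by elliptic regularity, and the proof of Proposition 3.2 therefore applies verbatim to $h$. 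I would briefly record this extension of scope at the start of the proof.

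To call $P_{\infty,i}^*(h)$ an eigenfunction we also need $P_{\infty,i}^*(h)\not\equiv 0$. This is immediate: the map $P_{\infty,i}:X_\infty\to X_i$ is surjective (each leaf covers $X_i$), so $h\not\equiv 0$ on $X_i$ forces $P_{\infty,i}^*(h)\not\equiv 0$ on $X_\infty$. Since the whole argument reduces to invoking Proposition 3.2 plus these observations, there is no substantive obstacle; the mild extension from $C_c^{\infty}$ to smooth eigenfunctions is the only step that is not purely formal.
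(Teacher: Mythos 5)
Your proof is correct and follows essentially the same route as the paper: both simply apply the intertwining identity of Proposition 3.2 to the eigenfunction and read off $\Delta_{X_{\infty}}P_{\infty,i}^*(h)=\lambda P_{\infty,i}^*(h)$. You are in fact slightly more careful than the paper, which also invokes Proposition 3.2 for a smooth (not necessarily compactly supported) eigenfunction but does not comment on the extension of scope or on the nonvanishing of the pullback.
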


\begin{proof}
Say $h$ is of eigenvalue $\lambda$. Notice that $h\in L^2(X_i)$ is smooth on $X_i$ as an eigenfunction of $\Delta_{X_i}$. Proposition 3.2 implies that
\[\Delta_{X_{\infty}} P_{\infty,i}^*(h)=P_{\infty,i}^*(\Delta_{X_i}h)=P_{\infty,i}^*(\lambda h)=\lambda P_{\infty,i}^*(h).\]
\end{proof}

Notice that Proposition 3.2 and Corollary 3.3 only concerned the 
Laplacian acting on smooth functions. However, we are interested 
in the self-adjoint Laplacians since the spectral theorem holds for self-adjoint operators.

In order to construct a self-adjoint Laplacian, 
we first restrict 
the domain of the Laplacian to a dense subset of $L^2(X_{\infty})$
on which the operator is symmetric. We will then show that
this symmetric operator has a self-adjoint extension.

Note that Corollary 2.7 and Proposition 3.2 imply that 
$\bigcup_{i\in\mathbb{N}} P_{\infty,i}^*(C_c^{\infty}(X_i))$ 
is a subset of $\mathcal{S}\cap L^2(X_{\infty})$ 
and $\Delta_{X_{\infty}}(\bigcup_{i\in\mathbb{N}}
P_{\infty,i}^*(C_c^{\infty}(X_i)))\subset L^2(X_{\infty})$.

\begin{prop}
$\bigcup_{i\in\mathbb{N}}
P_{\infty,i}^*(C_c^{\infty}(X_i))$ is a dense subset of  $L^2(X_{\infty})$ with respect to the $L^2(X_{\infty})$ norm.
\end{prop}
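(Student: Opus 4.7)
The plan is to deduce this density statement by combining Proposition 2.9 with the classical fact that $C_c^{\infty}(X_i)$ is dense in $L^2(X_i)$ for each complete Riemannian manifold $X_i$, and then transporting the approximation to $X_{\infty}$ via the isometric pullback established in Corollary 2.7.

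More precisely, fix $f \in L^2(X_{\infty})$ and $\epsilon > 0$. By Proposition 2.9 applied with $p = 2$, there exist finitely many indices $i_1, \ldots, i_n \in \Nbb$, scalars $c_1, \ldots, c_n \in \Cbb$, and functions $h_k \in L^2(X_{i_k})$ such that
\[
\Bigl\| f - \sum_{k=1}^{n} c_k \, P_{\infty, i_k}^*(h_k) \Bigr\|_{L^2(X_{\infty})} < \frac{\epsilon}{2}.
\]
Next, for each $k$, I would invoke the standard fact that $C_c^{\infty}(X_{i_k})$ is dense in $L^2(X_{i_k})$: since $X_{i_k}$ is a manifold carrying a Borel measure $\mu_{i_k}$, continuous compactly supported functions are dense by inner/outer regularity, and these in turn are approximated in $L^2$ by smooth compactly supported functions via local convolution with mollifiers in charts. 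Choose $\phi_k \in C_c^{\infty}(X_{i_k})$ with $\|h_k - \phi_k\|_{L^2(X_{i_k})} < \epsilon / (2n(|c_k|+1))$.

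Finally, by Corollary 2.7 the pullback $P_{\infty, i_k}^*$ is an $L^2$-isometry, so
\[
\Bigl\| \sum_{k=1}^{n} c_k P_{\infty, i_k}^*(h_k) - \sum_{k=1}^{n} c_k P_{\infty, i_k}^*(\phi_k) \Bigr\|_{L^2(X_{\infty})} \leq \sum_{k=1}^{n} |c_k| \cdot \|h_k - \phi_k\|_{L^2(X_{i_k})} < \frac{\epsilon}{2}.
\]
Combining these two estimates via the triangle inequality shows that $f$ is approximated to within $\epsilon$ by the finite linear combination $\sum_{k} c_k P_{\infty, i_k}^*(\phi_k)$, which lies in the span of $\bigcup_{i} P_{\infty,i}^*(C_c^{\infty}(X_i))$.

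There is no serious obstacle here; the work was done in Proposition 2.9. The only point requiring mild care is justifying that $C_c^{\infty}(X_i)$ is dense in $L^2(X_i, \mu_i)$, which follows from the local product structure of $\mu_i$ together with the standard density result on $\Rbb^n$ applied in coordinate charts, plus a partition of unity argument.
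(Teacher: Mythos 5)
Your proof is correct and follows essentially the same route as the paper: Proposition 2.9 to reduce to finite linear combinations of pullbacks of $L^2(X_i)$ functions, density of $C_c^{\infty}(X_i)$ in $L^2(X_i)$, and the isometry of $P_{\infty,i}^*$ from Corollary 2.7. The only cosmetic point is that you end with an approximant in the \emph{span} of $\bigcup_i P_{\infty,i}^*(C_c^{\infty}(X_i))$ rather than in the union itself; since $P_{\infty,i}^*(C_c^{\infty}(X_i))\subset P_{\infty,j}^*(C_c^{\infty}(X_j))$ for $j\geq i$, the union is an increasing union of linear subspaces and hence equals its span, closing that gap.
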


\begin{proof}
Proposition 2.9 implies that the collection of finite linear combinations of
functions in the set $\bigcup_{i\in\mathbb{N}} P_{\infty,i}^*(L^2(X_i))$ is a
dense subset of $L^2(X_{\infty})$ with respect to the $L^2(X_{\infty})$ norm.

Notice that $C_c^{\infty}(X_i)$ is dense in $L^2(X_i)$ with respect to the
$L^2$ norm for each $i$. Corollary 2.7 implies that
$P_{\infty,i}^*(C_c^{\infty}(X_i))$ is dense in $P_{\infty,i}^*(L^2(X_i))$ with
respect to the $L^2(X_{\infty})$ norm for each $i$. The conclusion follows.
\end{proof}

Therefore by restricting the domain of $\Delta_{X_{\infty}}$ to $\bigcup_{i\in\mathbb{N}}
P_{\infty,i}^*(C_c^{\infty}(X_i))$, we have a Laplacian defined on a dense subset of $L^2(X_{\infty})$ with image in $L^2(X_{\infty})$. 

We now show that $\Delta_{X_{\infty}}$ restricted to $\bigcup_{i\in\mathbb{N}}
P_{\infty,i}^*(C_c^{\infty}(X_i))$ 
is a symmetric nonnegative operator.

\begin{prop}
The operator $\Delta_{X_{\infty}}: \bigcup_{i\in\mathbb{N}}
P_{\infty,i}^*(C_c^{\infty}(X_i))\to
L^2(X_{\infty})$ is symmetric and nonnegative.
\end{prop}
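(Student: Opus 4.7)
The plan is to reduce both symmetry and nonnegativity on $X_{\infty}$ to the corresponding (classical) properties of $\Delta_{X_N}$ on $C_c^{\infty}(X_N)$ at a single, sufficiently large level $N$. The key point is that elements of the domain coming from different levels can be rewritten as pullbacks from a common level.

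First I would observe that the domain $\bigcup_{i\in\mathbb{N}}P_{\infty,i}^*(C_c^{\infty}(X_i))$ is actually a nested union, hence a linear subspace of $L^2(X_{\infty})$: for $i\leq j$, since $P_{\infty,i}=P_{j,i}\circ P_{\infty,j}$, any $P_{\infty,i}^*(h)$ equals $P_{\infty,j}^*(P_{j,i}^*(h))$, and $P_{j,i}^*(h)\in C_c^{\infty}(X_j)$ because $P_{j,i}$ is a finite covering (so preimages of compact sets are compact). Consequently, given any two elements $f=P_{\infty,i}^*(h)$ and $g=P_{\infty,j}^*(k)$ of the domain, I can choose $N=\max(i,j)$ and write $f=P_{\infty,N}^*(\tilde h)$, $g=P_{\infty,N}^*(\tilde k)$ with $\tilde h,\tilde k\in C_c^{\infty}(X_N)$.

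Next I would combine Proposition 3.2 (which gives $\Delta_{X_{\infty}}P_{\infty,N}^*(\phi)=P_{\infty,N}^*(\Delta_{X_N}\phi)$ for $\phi\in C_c^{\infty}(X_N)$) with Corollary 2.10 (which preserves inner products under pullback) to obtain
\[
\langle \Delta_{X_{\infty}}f,\,g\rangle_{X_{\infty}}
= \langle P_{\infty,N}^*(\Delta_{X_N}\tilde h),\,P_{\infty,N}^*(\tilde k)\rangle_{X_{\infty}}
= \langle \Delta_{X_N}\tilde h,\,\tilde k\rangle_{X_N}.
\]
Since $\Delta_{X_N}$ is a symmetric, nonnegative operator on $C_c^{\infty}(X_N)$ (integration by parts, with no boundary contribution for compactly supported data on any Riemannian manifold), this equals $\langle \tilde h,\,\Delta_{X_N}\tilde k\rangle_{X_N}$, which in turn equals $\langle f,\,\Delta_{X_{\infty}}g\rangle_{X_{\infty}}$ after undoing the pullback; the same identity with $g=f$ gives $\langle \Delta_{X_{\infty}}f,f\rangle_{X_{\infty}}=\langle\Delta_{X_N}\tilde h,\tilde h\rangle_{X_N}\geq 0$.

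Finally, for arbitrary finite linear combinations in the domain, I would simply take $N$ to be the maximum of all indices appearing in the two sums and repeat the argument, linearity then yielding the full symmetry and nonnegativity statements. I do not anticipate a substantive obstacle: the whole proof is a bookkeeping consequence of Proposition 3.2, Corollary 2.10, and the standard symmetry/nonnegativity of the Laplacian on compactly supported smooth functions on a Riemannian manifold.
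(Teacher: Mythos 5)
Your proof is correct, but it takes a genuinely different route from the paper's. You reduce everything to a single finite level: since $P_{j,i}$ is a finite covering, $P_{j,i}^*$ maps $C_c^{\infty}(X_i)$ into $C_c^{\infty}(X_j)$, so the domain is a nested union and any two test functions can be rewritten as pullbacks $P_{\infty,N}^*(\tilde h)$, $P_{\infty,N}^*(\tilde k)$ from a common level $N$; Proposition 3.2 and Corollary 2.10 then convert $\langle \Delta_{X_{\infty}}f,g\rangle_{X_{\infty}}$ into $\langle \Delta_{X_N}\tilde h,\tilde k\rangle_{X_N}$, where the classical Green identity for compactly supported smooth functions on a Riemannian manifold gives symmetry and nonnegativity (note that $\mu_N$ is a constant multiple, $1/[G_1:G_N]$, of the Riemannian volume, so the identity applies verbatim). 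The paper instead works directly on $X_{\infty}$: it chooses a partition of unity subordinate to trivializing charts, invokes the local product structure $\mu_{\infty}\cong \nu\times\mu_1$ of the measure, and integrates by parts leaf-wise inside each chart, with boundary terms vanishing because the supports sit compactly inside the chart. Your reduction is shorter and keeps all the analysis on honest manifolds, sidestepping the measure-theoretic bookkeeping on the solenoid (and the implicit check that multiplying by the pulled-back partition functions keeps one inside the domain); the paper's computation is intrinsic to $X_{\infty}$ and exhibits how the product measure interacts with the leafwise Laplacian. Both are valid; yours is arguably the cleaner argument for this particular statement.
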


\begin{proof}
Given any open cover $\{U_{\ell}\}$ of $X_1$, there exists a
partition of unity $\{\rho_{\ell}\}$ on $X_1$ such that the support of $\rho_{\ell}$ is
a compact subset of $U_{\ell}$ for each $\ell$. Choose the open cover
$\{U_{\ell}\}$ such that there exists a local trivialization on each
$U_{\ell}$. So there exists a partition of unity $\{\zeta_{\ell}\}$ on
$X_{\infty}$ such that the support of $\zeta_{\ell}$ is a compact subset of
$\pi^{-1}(U_{\ell}) \cong G_{\infty}\times U_{\ell}$ for each $\ell$.

As a result, it suffices to consider each 
$f,u$ that lie in $\bigcup_{i\in\mathbb{N}}
P_{\infty,i}^*(C_c^{\infty}(X_i))$ with support as 
a subset of $G_{\infty}\times U$, where $U\subset X_1$ is open.  

Let $g$ denote the Riemannian
metric on each leaf. We apply the fact that the measure on
$X_{\infty}$ is the product measure as in Definition 2.1:
\begin{align*}
&\quad\int_{X_{\infty}}\Delta_{X_{\infty}} f\cdot \bar{u}
=\int_{G_{\infty}\times U}\Delta_{X_{\infty}} f\cdot \bar{u}
=\int_{G_{\infty}}\int_{U}\Delta_{X_{\infty}} f\cdot \bar{u}\\
&=\int_{G_{\infty}}\int_{U} g(\nabla f\,, \nabla u)
=\int_{G_{\infty}}\int_{U} f\cdot \overline{\Delta_{X_{\infty}} u}
=\int_{X_{\infty}} f\cdot \overline{\Delta_{X_{\infty}} u}.
\end{align*}

The reason that there is no boundary term in the fourth 
integral is that the support of $f$ and $u$ restricted on each leaf is homeomorphic to a subset of $U$.
So the operator is symmetric.

For the nonnegativeness of $\Delta_{X_{\infty}}$, it again suffices to consider each function $f$ in $\bigcup_{i\in\mathbb{N}}
P_{\infty,i}^*(C_c^{\infty}(X_i))$ with support as a subset of $G_{\infty}\times U$. 
\begin{align*}
\int_{X_{\infty}}\Delta_{X_{\infty}} f\cdot \bar{f}
&=\int_{G_{\infty}\times U}\Delta_{X_{\infty}} f\cdot \bar{f}
=\int_{G_{\infty}}\int_{U}\Delta_{X_{\infty}} f\cdot \bar{f}\\
&=\int_{G_{\infty}}\int_{U}g(\nabla f\,, \nabla f)\geq 0.
\end{align*}

Again there is no boundary term in the fourth integral. So the operator is nonnegative.
\end{proof}

\begin{ass}
For the rest of this section, we will retrict the domain of $\Delta_{X_{\infty}}$ to $\bigcup_{i\in\mathbb{N}}
P_{\infty,i}^*(C_c^{\infty}(X_i))$.
\end{ass}

\begin{remark}
Since there is an inclusion map $P_{i+1,i}^*(C_c^{\infty}(X_i))\to C_c^{\infty}(X_{i+1})$ for each integer $i$, $C_c^{\infty}(X_1)\to C_c^{\infty}(X_2)\to C_c^{\infty}(X_3)\to \cdots$ is a direct sequence. 
Notice that $\bigcup_{i\in\mathbb{N}}
P_{\infty,i}^*(C_c^{\infty}(X_i))$ is
the direct limit of this sequence. Therefore it is reasonable to consider $\bigcup_{i\in\mathbb{N}}
P_{\infty,i}^*(C_c^{\infty}(X_i))$ as a domain of $\Delta_{X_{\infty}}$.
\end{remark}

We will now show
that the Laplacian on $X_{\infty}$
has a unique self-adjoint extension. 
The proof follows
the method that \cite{Strichartz83} developed for the Laplacian 
on a complete Riemannian manifold.

\begin{prop}
If $X_1$ is complete, then 
the operator $\Delta_{X_{\infty}}$ is essentially
self-adjoint.
\end{prop}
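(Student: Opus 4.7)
The plan is to prove essential self-adjointness via the standard criterion for a symmetric nonnegative operator $T$ on a dense domain: $T$ is essentially self-adjoint if and only if the range of $T+I$ is dense in the Hilbert space. Since Proposition 3.5 already establishes that $\Delta_{X_{\infty}}$ on the domain $D := \bigcup_{i\in\Nbb} P_{\infty,i}^*(C_c^{\infty}(X_i))$ is symmetric and nonnegative, it suffices to show that any $f \in L^2(X_{\infty})$ satisfying
\[
\langle f,\, (\Delta_{X_{\infty}}+I)\phi\rangle_{X_{\infty}} = 0 \quad\text{for every } \phi \in D
\]
must vanish.

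The core step is to transfer this orthogonality condition from $X_{\infty}$ down to each $X_i$. By Corollary 2.7 the pullback $P_{\infty,i}^* : L^2(X_i) \hookrightarrow L^2(X_{\infty})$ is an isometric embedding, so it has a bounded Hilbert-space adjoint $Q_i : L^2(X_{\infty}) \to L^2(X_i)$. Set $f_i := Q_i f$. For an arbitrary $\psi \in C_c^{\infty}(X_i)$, take $\phi = P_{\infty,i}^*(\psi) \in D$; by Proposition 3.2, which intertwines the Laplacian with the pullback, one obtains
\[
0 = \langle f,\, P_{\infty,i}^*((\Delta_{X_i}+I)\psi)\rangle_{X_{\infty}} = \langle f_i,\, (\Delta_{X_i}+I)\psi\rangle_{X_i}.
\]
Thus $f_i \in L^2(X_i)$ is orthogonal to $(\Delta_{X_i}+I)(C_c^{\infty}(X_i))$.

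To finish, I would invoke Strichartz's theorem \cite{Strichartz83} that on a complete Riemannian manifold the Laplacian on $C_c^{\infty}$ is essentially self-adjoint. Since the metric on $X_i$ is pulled back from the complete manifold $X_1$ by a covering map that is a local isometry, $X_i$ is itself complete, so $\Delta_{X_i}$ on $C_c^{\infty}(X_i)$ is essentially self-adjoint, and consequently $(\Delta_{X_i}+I)(C_c^{\infty}(X_i))$ is dense in $L^2(X_i)$. This forces $f_i = 0$ for every $i$, which is equivalent to $f$ being $L^2(X_{\infty})$-orthogonal to $P_{\infty,i}^*(L^2(X_i))$ for every $i$. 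Applying Proposition 2.9, which asserts that finite linear combinations of such pullbacks are $L^2$-dense in $L^2(X_{\infty})$, we conclude $f=0$, completing the argument.

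The main obstacle is really a conceptual one: essential self-adjointness of $\Delta_{X_{\infty}}$ does not follow from working on a single leaf (leaves are noncompact and their $L^2$ structure does not match $L^2(X_{\infty})$), and a Strichartz-style cutoff argument directly on $X_{\infty}$ would require constructing exhaustions compatible with the solenoid structure. The reduction described above sidesteps this entirely by expressing the cokernel of $\Delta_{X_{\infty}}+I$ in terms of the cokernels of each $\Delta_{X_i}+I$ through the isometric adjoint $Q_i$, so that all genuinely analytic content is outsourced to the classical theorem on each $X_i$, while Propositions 2.9, 3.2, and 3.5 supply the purely formal bookkeeping needed for the transfer.
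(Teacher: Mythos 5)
Your proof is correct. The structural reduction is the same as the paper's: both arguments test a putative deficiency element of $\Delta_{X_{\infty}}$ against $P_{\infty,i}^*(C_c^{\infty}(X_i))$, use Proposition 3.2 and Corollary 2.10 to push the relation down to $X_i$ (your $f_i=Q_if$ is exactly the component $\alpha$ in the paper's decomposition $u=P_{\infty,i}^*(\alpha)+h$), conclude that each component vanishes by a classical fact about complete manifolds, and finish with the density statement of Proposition 2.9. The genuine difference is \emph{where} the deficiency space is tested and hence \emph{which} classical fact is invoked. You use the semibounded criterion --- $\Delta_{X_{\infty}}\ge 0$ symmetric is essentially self-adjoint iff $\mathrm{Ran}(\Delta_{X_{\infty}}+I)$ is dense, i.e.\ $\ker(\Delta_{X_{\infty}}^*+I)=0$ --- so at the base level you only need that $\mathrm{Ran}(\Delta_{X_i}+I)$ is dense in $L^2(X_i)$, which follows directly from the Gaffney--Strichartz essential self-adjointness of $\Delta_{X_i}$ on $C_c^{\infty}(X_i)$ together with nonnegativity. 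The paper instead tests at a \emph{positive} eigenvalue $\lambda$ of $\Delta_{X_{\infty}}^*$ (following Strichartz's Lemma 2.1, with the opposite sign convention from its own Proposition 3.5) and must then invoke Yau's theorem to rule out positive $L^2$ eigenvalues of $\Delta_{X_i}^*$, since positivity of $\lambda$ alone does not place it outside the numerical range of a nonnegative operator. Your route is therefore slightly more economical in its analytic input --- essential self-adjointness on each $X_i$ suffices, with no appeal to Yau --- and it sidesteps the sign-convention tension present in the paper's version; the only thing to make explicit when writing it up is the citation for the dense-range criterion for semibounded symmetric operators (e.g.\ Reed--Simon, Theorem X.26).
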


\begin{proof}
The operator $\Delta_{X_{\infty}}$ is essentially self-adjoint if and only if there are no
eigenfunctions of positive eigenvalue in the domain of $\Delta_{X_{\infty}}^*$ \cite[P.136-137]{Reed-Simon75} or \cite[Lemma 2.1]{Strichartz83}.

Consider each function $u\in Dom(\Delta_{X_{\infty}}^*)$ such that
$\Delta_{X_{\infty}}^*u=\lambda u$ for some $\lambda>0$.
It suffices to show that $u=0$.

The assumptions $u\in Dom(\Delta_{X_{\infty}}^*)$ and
$\Delta_{X_{\infty}}^*u=\lambda u$ imply that $\langle
u,\Delta_{X_{\infty}}v\rangle=\langle \lambda u,v\rangle$ for every $v\in
\bigcup_i P_{\infty,i}^*(C_c^{\infty}(X_i))$. In particular,
\[\langle u, \Delta_{X_{\infty}}P_{\infty,i}^*(\beta)\rangle
=\langle \lambda u,P_{\infty,i}^*(\beta)\rangle.
\]
for each integer $i$ and each $\beta\in C_c^{\infty}(X_i)$.

Since $L^2(X_{\infty})=P_{\infty,i}^*(L^2(X_i))\bigoplus P_{\infty,i}^*(L^2(X_i))^{\perp}$, $u=P_{\infty,i}^*(\alpha)+h$ where $\alpha\in L^2(X_i)$ and $h\in
P_{\infty,i}^*(L^2(X_i))^{\perp}$. Then
\[
\langle P_{\infty,i}^*(\alpha),\Delta_{X_{\infty}}P_{\infty,i}^*(\beta)\rangle+
0
=\langle \lambda P_{\infty,i}^*(\alpha),P_{\infty,i}^*(\beta)\rangle+0.
\]

Proposition 3.2 implies that
\[
\langle P_{\infty,i}^*(\alpha),P_{\infty,i}^*(\Delta_{X_{i}}\beta)\rangle
=\langle \lambda P_{\infty,i}^*(\alpha),P_{\infty,i}^*(\beta)\rangle.
\]

Corollary 2.10 implies that the above equation is equivalent with
\[\langle \alpha,\Delta_{X_{i}}\beta\rangle
=\langle \lambda \alpha,\beta\rangle.\]

Therefore $\alpha\in Dom(\Delta_{X_{i}}^*)$ and  
$\Delta_{X_{i}}^*\alpha=\lambda \alpha$ where $\lambda>0$. This implies that
$\alpha=0$ on complete manifold $X_i$ for each integer $i$ \cite[Theorem 3]{Yau76}. 
So the projection of $u$ onto $L^2(X_i)$ equals 0 for each integer $i$. 

For each integer $i$, the space $P_{i+1,i}^*(L^2(X_i))$ is a closed subspace of $L^2(X_{i+1})$. Let $V_{i+1}$ denote $P_{i+1,i}^*(L^2(X_i))^{\perp}$. Let $V_{1}$ denote $L^2(X_1)$. Proposition 2.9 implies that  $L^2(X_{\infty})=\overline{\bigoplus P_{\infty,i}^*V_{i}}$. 

The fact that the projection of $u$ onto $L^2(X_i)$ equals 0 for each integer $i$ implies that the projection of $u$ onto $V_j$ equals 0 for each $1\leq j\leq i$. So the projection of $u$ onto $V_j$ equals 0 for each $j\in\Nbb$. 

As a result, $u=0$. 
\end{proof}

\begin{remark}[Compact Manifolds with boundary]
Consider the case where each $X_i$ is a compact manifold with boundary. In order for each Laplacian to be symmetric, we consider the collection $L_i$ of $C^{\infty}(X_i)$ functions that satisfy
any Robin condition, for instance, the Dirichlet condition or the Neumann condition. Notice that $L_i$ is a dense subset of $C^{\infty}(X_i)$ under the $L^2$ norm. Then we consider the leafwise Laplacian on $\bigcup_{i\in\Nbb}P_{\infty,i}^*(L_i)$ for $X_{\infty}$. 
Notice that $\bigcup_i P_{\infty,i}^*(L_i)$ is a dense subset of $L^2(X_{\infty})$.

The results in this paper are still true for the case of a sequence of 
compact manifolds with boundary. 
\end{remark}

\begin{lem}
Let $A: \Hcal \to \Hcal$ and $A': \Hcal' \to \Hcal'$  
be closable, unbounded operators with
respective dense domains $\Ccal$ and $\Ccal'$. 
Suppose that $\phi: \Hcal \to \Hcal'$ is 
continuous, maps $\Ccal$ into $\Ccal'$, and that for 
each $u \in \Ccal$ we have 
\begin{equation*} \label{eqn:commute}
A' \circ \phi(u) = \phi \circ A(u).
\end{equation*}
Then $\phi$ maps the domain $\Dcal$ of the closure $\overline{A}$
into the domain $\Dcal'$ of the closure  $\overline{A'}$, and 
for each $u \in \Dcal$ we have 
\begin{equation*} \label{eqn:commute2}
\overline{A'} \circ \phi(u) = \phi \circ \overline{A}(u).
\end{equation*}
\end{lem}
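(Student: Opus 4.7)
The plan is to exploit the sequential characterization of the closure of a closable operator: $u \in \Dcal$ precisely when there exists a sequence $(u_n) \subset \Ccal$ with $u_n \to u$ in $\Hcal$ and $A u_n$ convergent in $\Hcal$, in which case $\overline{A} u = \lim_n A u_n$ (closability guarantees this limit does not depend on the approximating sequence). The intertwining identity $A' \circ \phi = \phi \circ A$ is only given on the small domain $\Ccal$, but continuity of $\phi$ will let me transport an approximating sequence for $u$ over to an approximating sequence for $\phi(u)$ with respect to $A'$.

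Given $u \in \Dcal$, pick $(u_n) \subset \Ccal$ with $u_n \to u$ and $A u_n \to \overline{A} u$ in $\Hcal$. Because $\phi(\Ccal) \subset \Ccal'$, each $\phi(u_n)$ lies in $\Ccal' \subset \operatorname{Dom}(A')$, and the hypothesis yields $A' \phi(u_n) = \phi(A u_n)$ for every $n$. Applying continuity of $\phi$ to the two convergent sequences $u_n \to u$ and $A u_n \to \overline{A} u$ produces, in $\Hcal'$, the convergences $\phi(u_n) \to \phi(u)$ and $A' \phi(u_n) = \phi(A u_n) \to \phi(\overline{A} u)$. Hence the pair $\bigl(\phi(u),\, \phi(\overline{A} u)\bigr)$ lies in the closure of the graph of $A'$, so $\phi(u) \in \Dcal'$ and $\overline{A'} \phi(u) = \phi(\overline{A} u)$, as required.

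The argument is essentially a graph-closure diagram chase and I do not expect a serious obstacle. The only points to watch are that I use only sequential continuity of $\phi$, so no linearity or boundedness beyond what is stated is needed, and that the limit value $\phi(\overline{A} u)$ is unambiguously $\overline{A'} \phi(u)$ precisely because $A'$ is assumed closable.
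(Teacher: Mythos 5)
Your proof is correct and is essentially the paper's argument: the paper phrases it as the continuous map $\phi\times\phi$ sending the graph of $A$ on $\Ccal$ into the graph of $A'$ and hence closure into closure, while you unpack the same fact with approximating sequences. Both hinge on the same three ingredients — the intertwining identity on $\Ccal$, continuity of $\phi$, and closability of $A'$ identifying the closure of its graph with the graph of $\overline{A'}$.
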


\begin{proof}
The map $\phi$ naturally determines a map 
$\phi \times \phi: \Hcal \times \Hcal \to \Hcal' \times \Hcal'$.
We have 
\[ 
\phi \times \phi(\{(u, Au)~ |~ u \in \Ccal\})~
=~
\{(\phi(u), A \phi(u))~ |~ u \in \Ccal\}~
=~
\{(u, Au)~ |~ u \in \phi(\Ccal)\}.
\]

In particular, $\phi \times \phi$ maps the
graph of $A$ into the graph of $A'$. 
Since $\phi \times \phi$ is continuous,
the closure is mapped into the closure.
The claim follows. 
\end{proof}

The operator $\Delta_{X_i}$ is essentially self-adjoint for every $i\in\Nbb\cup\{\infty\}$. The self-adjoint extension of $\Delta_{X_i}$ is the closure of $\Delta_{X_i}$. Let $\overline{\Delta_{X_i}}$ denote the closure.

Consider the map $P_{\infty,i}^*:L^2(X_i)\to L^2(X_{\infty})$. $P_{\infty,i}^*$ preserves $L^2$ norm by Corollary 2.7, so $P_{\infty,i}^*$ is continuous. $P_{\infty,i}$ maps $C_c^{\infty}(X_i)$ to $\bigcup_i P_{\infty,i}^*(C_c^{\infty}(X_i))$. And for each $\alpha\in C_c^{\infty}(X_i)$,  we have $\Delta_{X_{\infty}} P_{\infty,i}^*(\alpha)=P_{\infty,i}^*(\Delta_{X_i}\alpha)$. Therefore Lemma 3.10 gives rise to the following:

\begin{coro}
For each integer $i$, we have $P_{\infty,i}^*(Dom(\overline{\Delta_{X_i}}))\subset Dom(\overline{\Delta_{X_{\infty}}})$.
\end{coro}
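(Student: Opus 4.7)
The plan is to recognize Corollary 3.11 as a direct application of Lemma 3.10, since all of the required ingredients have already been assembled in the preceding discussion. Concretely, I would take $A = \Delta_{X_i}$ with dense domain $\Ccal = C_c^{\infty}(X_i) \subset L^2(X_i)$, $A' = \Delta_{X_{\infty}}$ with dense domain $\Ccal' = \bigcup_{j \in \Nbb} P_{\infty,j}^*(C_c^{\infty}(X_j)) \subset L^2(X_{\infty})$, and $\phi = P_{\infty,i}^{*} \colon L^2(X_i) \to L^2(X_{\infty})$.

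Then I would verify the four hypotheses of Lemma 3.10 one at a time. Both $A$ and $A'$ are closable because each is essentially self-adjoint: for $A = \Delta_{X_i}$ this is the classical fact for Laplacians on complete Riemannian manifolds (the same fact invoked in the proof of Proposition 3.8 via Yau), and for $A' = \Delta_{X_{\infty}}$ it is exactly the content of Proposition 3.8. The map $\phi$ is continuous because Corollary 2.7 shows it is an $L^2$ isometry. It carries $\Ccal$ into $\Ccal'$ by inspection, since $P_{\infty,i}^{*}(C_c^{\infty}(X_i))$ is literally one of the summands comprising $\Ccal'$. Finally, the intertwining identity $A' \circ \phi(u) = \phi \circ A(u)$ for $u \in C_c^{\infty}(X_i)$ is precisely Proposition 3.2. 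Invoking Lemma 3.10 then gives $P_{\infty,i}^{*}(Dom(\overline{\Delta_{X_i}})) \subset Dom(\overline{\Delta_{X_{\infty}}})$, which is the claim.

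There is no real obstacle; the corollary is essentially bookkeeping built on top of the substantive earlier work (Proposition 3.8 for essential self-adjointness on the solenoid, Proposition 3.2 for the pullback-intertwines-Laplacians identity, and Corollary 2.7 for the $L^2$ isometry property of $P_{\infty,i}^{*}$). The only point I would flag for care is the implicit identification of domains: because each Laplacian is essentially self-adjoint, the closure $\overline{A}$ in the sense of Lemma 3.10 coincides with the unique self-adjoint extension whose domain appears in the statement of the corollary, so no ambiguity arises in reading off the conclusion.
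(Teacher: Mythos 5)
Your proposal is correct and matches the paper's argument: the paper likewise applies Lemma 3.10 with $\phi = P_{\infty,i}^*$, citing Corollary 2.7 for continuity, the inclusion $P_{\infty,i}^*(C_c^{\infty}(X_i)) \subset \bigcup_j P_{\infty,j}^*(C_c^{\infty}(X_j))$, and Proposition 3.2 for the intertwining identity. Your extra remarks on closability via essential self-adjointness and the identification of the closure with the self-adjoint extension are consistent with the paper's surrounding discussion.
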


\begin{remark}
Notice that if $X_1$ is complete but not compact, eigenfunctions of the Laplacian operator of $X_i$, $i\in\Nbb$, may not be compactly supported. So they may not lie in $C_c^{\infty}(X_i)$, $i\in\Nbb$. Therefore the pullback functions of eigenfunctions onto $X_{\infty}$ may not necessarily lie in $\bigcup_i P_{\infty,i}^*C_c^{\infty}(X_i)$. However, eigenfunctions lie in the domain of the Laplacian. So the pullbacks of these eigenfunctions still lie in the domain of the Laplacian of $X_{\infty}$ by Corollary 3.11.
\end{remark}

%%%%%%%%%%%%%%%%%%%%%%%%%%%%%%%%%%%%%%%%%%%%%%%%%%%%%%%%%%%%%%%%%%%%%%%%
\section{Spectrum}

If $X_1$ is a complete Riemannian manifold, the operator $\Delta_{X_{\infty}}$ is essentially self-adjoint by Proposition 3.8. The operator $\Delta_{X_{i}}$ is also essentially self-adjoint for each $i$. For the rest of the paper, we will let $\Delta_{X_i}$ denote the unique self-adjoint extension for every $i\in\Nbb\cup\{\infty\}$.

In this section, we discuss the spectrum of $\Delta_{X_{\infty}}$.

Let's first discuss the case where $X_1$---and hence each
$X_k$---is a closed manifold. 

For basic facts concerning the spectral theory of closed manifolds
we refer the reader to \cite{Rosenberg97}. Let $A_k$ be an orthonormal collection of eigenfunctions
of $\Delta_{X_k}$ whose algebraic span is dense in $L^2(X_k)$. Since $P_{k+1,k}^*(L^2(X_k))$ is a closed subspace of $L^2(X_{k+1})$, we can choose such collections so that $P_{k+1,k}^*(A_{k})\subset A_{k+1}$ for every integer $k$.

Since $P_{k+1,k}\circ P_{\infty,k+1}=P_{\infty,k}$, we have 
\[
P_{\infty,k}^{*}(A_{k})
=
(P_{k+1,k}\circ P_{\infty,k+1})^{*}(A_{k})
=
P_{\infty,k+1}^{*}( P_{k+1,k}^{*}(A_{k}))\subset P_{\infty,k+1}^{*}(A_{k+1})
\]
for every integer $k$. Then $\bigcup_k P_{\infty,k}^{*}(A_k)$ is a union of an increasing sequence. Corollary 2.7 implies that $\bigcup_k P_{\infty,k}^{*}(A_k)\subset L^2(X_{\infty})$. Corollary 2.10 implies that $\bigcup_k P_{\infty,k}^{*}(A_k)$ is an orthonormal collection. 

Let $\mathcal{A}$ denote the set 
\[
\left\{f\in L^2(X_{\infty})\, 
\left|~
\,f
=
\sum_{j=1}^{L}a_j e_j,\,
a_j\in\Cbb,\,
e_j\in\bigcup_k P_{\infty,k}^{*}(A_k),\,
L\in\Nbb
\right.
\right\}.
\]

Let $\overline{\mathcal{A}}$ denote the closure of $\mathcal{A}$ with respect to the $L^2(X_{\infty})$ norm.

\begin{prop}\label{prop:L2-solenoid}
$\overline{\mathcal{A}}=L^2(X_{\infty})$.
\end{prop}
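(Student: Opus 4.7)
The plan is to reduce the claim to two ingredients that are already in hand: the approximation result Proposition 2.9, and the fact that each $A_i$ is an orthonormal basis for $L^2(X_i)$ (its algebraic span is dense). The pullback $P_{\infty,i}^*$ is an isometry onto its image by Corollary \ref{pullback of L^2 is L^2}, so pullbacks of $L^2$-approximations remain $L^2$-approximations in $L^2(X_{\infty})$. Combining these two ingredients should suffice.

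More precisely, I would proceed as follows. First, fix $i \in \Nbb$. Since the algebraic span of $A_i$ is dense in $L^2(X_i)$, for each $h \in L^2(X_i)$ one can find a sequence of finite linear combinations of elements in $A_i$ converging to $h$ in the $L^2(X_i)$-norm. Applying $P_{\infty,i}^*$ and using that it is a linear isometry, the pullbacks converge to $P_{\infty,i}^*(h)$ in the $L^2(X_{\infty})$-norm. These pullbacks are finite linear combinations of elements of $P_{\infty,i}^*(A_i) \subset \bigcup_k P_{\infty,k}^*(A_k)$, hence lie in $\mathcal{A}$. Therefore $P_{\infty,i}^*(L^2(X_i)) \subset \overline{\mathcal{A}}$ for every $i$.

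Next, since $\overline{\mathcal{A}}$ is a closed linear subspace of $L^2(X_{\infty})$, it contains every finite linear combination of elements of $\bigcup_{i \in \Nbb} P_{\infty,i}^*(L^2(X_i))$. By Proposition 2.9, the collection of such finite linear combinations is dense in $L^2(X_{\infty})$, so $\overline{\mathcal{A}} \supseteq L^2(X_{\infty})$. The reverse inclusion $\overline{\mathcal{A}} \subseteq L^2(X_{\infty})$ is immediate because $\mathcal{A}$ consists of $L^2(X_{\infty})$ functions and $L^2(X_{\infty})$ is complete.

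There is no serious obstacle here; the statement is essentially bookkeeping that assembles Proposition 2.9 (density of pullbacks of $L^2$-functions from the finite levels) with the standard fact that Laplace eigenfunctions form an orthonormal basis on a closed manifold. The only subtlety worth flagging is the need to invoke isometry of $P_{\infty,i}^*$ to transfer the level-$i$ approximation into $L^2(X_{\infty})$, but this is exactly Corollary \ref{pullback of L^2 is L^2}. The compatibility $P_{k+1,k}^*(A_k) \subset A_{k+1}$ is not actually needed for this proposition --- it is used elsewhere to ensure that $\bigcup_k P_{\infty,k}^*(A_k)$ is itself orthonormal --- so the proof of density only requires that each $A_i$ span a dense subspace of $L^2(X_i)$.
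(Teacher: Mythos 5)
Your proof is correct and follows essentially the same route as the paper: both rest on Proposition 2.9 together with the density of the span of $A_i$ in $L^2(X_i)$ and the isometry of $P_{\infty,i}^*$ from Corollary 2.7. The only cosmetic difference is that the paper routes the approximation through characteristic functions before invoking the eigenfunction basis, whereas you approximate arbitrary $L^2(X_i)$ functions directly, which is if anything slightly cleaner.
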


\begin{proof}
Proposition 2.9 implies that every function $f\in L^2(X_{\infty})$ 
can be approximated by finite linear combinations of 
pullbacks of characteristic functions on $X_i$, $i\in\Nbb$. 
On each $X_i$, every characteristic function can be approximated 
by a finite linear combinations of elements in $A_i$ 
i.e. eigenfunctions of $\Delta_{X_i}$. 

Corollary 2.7 implies that the norm of each $L^2(X_i)$ function
equals the $L^2$ norm of its pullback in $L^2(X_{\infty})$. 
In particular, the pullback of each characteristic function associated 
to a subset of $X_i$ can be approximated by a finite linear combination 
of pullbacks of eigenfunctions of $\Delta_{X_i}$, $i\in\Nbb$, 
i.e. elements in $\mathcal{A}$. Therefore $f$ 
can be approximated by finite linear combinations of elements 
in $\mathcal{A}$.
\end{proof}

Let $\mathcal{M}$ denote the Lebesgue
$\sigma$-algebra on $\Rbb$.

Recall that, by the spectral theorem, each self-adjoint operator $T$
has a resolution of the identity. That is, there
exists a unique projection valued measure $E$ so that
$\langle T f,u\rangle=\int_{\Rbb}t\, d E_{f,u}(t)$
where $\omega \mapsto E_{f,u}(\omega) := \langle E(\omega)f,u\rangle$
is the complex-valued measure on $\Rbb$ 
associated to each $u$ and $f$ in $Dom(T)$ \cite{Rudin91}.

\begin{thm}
The resolution of the identity $E$ for $\Delta_{X_{\infty}}$ is 
given by the following formula
\[ 
\omega~
\mapsto~
\left(
f\mapsto\sum_{e\in\mathcal{A},\, \lambda(e) \in \omega}\langle f, e\rangle \cdot e 
\right)
\]
where $\omega\in\mathcal{M}$ and $\lambda(e)$ is the 
eigenvalue of $e$.
\end{thm}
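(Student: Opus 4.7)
The plan is to show that the right-hand side of the proposed formula defines a projection-valued measure which realizes the spectral resolution of $\overline{\Delta_{X_{\infty}}}$, then invoke uniqueness of the resolution of the identity. Throughout I interpret the sum as ranging over the orthonormal family $B := \bigcup_k P_{\infty,k}^{*}(A_k) \subset \mathcal{A}$, since $\lambda(e)$ is only defined for eigenfunctions; this is the natural reading of the statement.

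By Corollary 3.3 each $e \in B$ is an eigenfunction of $\Delta_{X_{\infty}}$ with eigenvalue $\lambda(e)$, and Proposition \ref{prop:L2-solenoid} asserts that $B$ is an orthonormal basis of $L^2(X_{\infty})$. For each $\omega \in \mathcal{M}$ set
\[
F(\omega) f \;:=\; \sum_{e \in B,\, \lambda(e) \in \omega} \langle f, e \rangle \, e,
\]
which converges unconditionally in $L^2(X_{\infty})$ by Bessel's inequality; it is the orthogonal projection onto $\overline{\mathrm{span}}\{e \in B : \lambda(e) \in \omega\}$. The projection-valued measure axioms are then immediate: $F(\emptyset) = 0$; $F(\Rbb) = I$ by Proposition \ref{prop:L2-solenoid}; $F(\omega_1) F(\omega_2) = F(\omega_1 \cap \omega_2)$ because these are projections onto spans of subfamilies of one common orthonormal basis; and countable additivity on disjoint families follows from the corresponding disjoint decomposition of $B$.

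Next, let $T$ be the self-adjoint operator obtained from $F$ via the spectral theorem, so $Dom(T) = \{f \in L^2(X_{\infty}) : \sum_{e \in B} \lambda(e)^2 |\langle f, e \rangle|^2 < \infty\}$ and $Tf = \sum_{e \in B} \lambda(e) \langle f, e \rangle\, e$. I claim $T$ extends $\Delta_{X_{\infty}}$ on its original domain $\bigcup_k P_{\infty,k}^{*}(C_c^{\infty}(X_k))$, which here equals $\bigcup_k P_{\infty,k}^{*}(C^{\infty}(X_k))$ since each $X_k$ is closed. Given $f = P_{\infty,k}^{*}(g)$ with $g \in C^{\infty}(X_k)$, expand $g = \sum_{\phi \in A_k} c_\phi \phi$ in $L^2(X_k)$; smoothness of $g$ gives $\Delta_{X_k} g \in L^2(X_k)$, so $\sum_\phi \lambda(\phi)^2 |c_\phi|^2 = \|\Delta_{X_k} g\|^2 < \infty$. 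Corollary 2.10 identifies $c_\phi = \langle f, P_{\infty,k}^{*}(\phi) \rangle$. Any basis element $e \in B \setminus P_{\infty,k}^{*}(A_k)$ is of the form $e = P_{\infty,k'}^{*}(\psi)$ with $k' > k$ and $\psi \in A_{k'} \setminus P_{k',k}^{*}(A_k)$; since $A_{k'}$ is orthonormal and contains $P_{k',k}^{*}(A_k)$, we have $\psi \perp P_{k',k}^{*}(L^2(X_k))$, and writing $f = P_{\infty,k'}^{*}(P_{k',k}^{*}(g))$ together with Corollary 2.10 yields $\langle f, e \rangle = 0$. Hence $f \in Dom(T)$ and
\[
Tf \;=\; \sum_{\phi \in A_k} \lambda(\phi) c_\phi P_{\infty,k}^{*}(\phi) \;=\; P_{\infty,k}^{*}(\Delta_{X_k} g) \;=\; \Delta_{X_{\infty}} f,
\]
by Proposition 3.2.

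Since $\Delta_{X_{\infty}}$ is essentially self-adjoint by Proposition 3.8, its closure $\overline{\Delta_{X_{\infty}}}$ is the unique self-adjoint extension and therefore equals $T$; uniqueness of the resolution of the identity then gives $F = E$. The main obstacle in this plan is the bookkeeping in the preceding paragraph that $f \in P_{\infty,k}^{*}(C^{\infty}(X_k))$ has zero inner product with basis elements outside $P_{\infty,k}^{*}(A_k)$, which hinges on the nested choice $P_{k',k}^{*}(A_k) \subset A_{k+1}$ built into the construction of $B$; the remainder reduces to formal manipulations with Parseval and Proposition 3.2.
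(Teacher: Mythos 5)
Your proposal is correct, and its overall architecture matches the paper's: both read the sum as running over the orthonormal eigenfunction family $\bigcup_k P_{\infty,k}^{*}(A_k)$, both verify the projection-valued-measure axioms directly from orthonormality and Proposition \ref{prop:L2-solenoid}, and both close by invoking uniqueness of the resolution of the identity. The one genuine difference is in how you identify the candidate measure with the spectral measure of the Laplacian. The paper computes $\int_{\Rbb} t\, dE_{f,u}(t)=\langle \Delta_{X_{\infty}}f,u\rangle$ for $f,u$ in the core by interchanging $\Delta_{X_{\infty}}$ with the infinite eigenfunction expansion and then cites Rudin's uniqueness theorem; the step $\sum_e \langle f,e\rangle\,\Delta_{X_{\infty}}e=\Delta_{X_{\infty}}f$ is asserted rather than justified there. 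You instead build the self-adjoint operator $T$ from the projection-valued measure via the functional calculus, check carefully (using the nesting $P_{k+1,k}^{*}(A_k)\subset A_{k+1}$ and Corollary 2.10) that $T$ agrees with $\Delta_{X_{\infty}}$ on the core $\bigcup_k P_{\infty,k}^{*}(C^{\infty}(X_k))$, and then use essential self-adjointness (Proposition 3.8) to conclude $T=\overline{\Delta_{X_{\infty}}}$. This buys you two things: it makes explicit why agreement on a core suffices (the paper's appeal to uniqueness is really an appeal to this same fact), and your coefficient bookkeeping -- that $\langle f,e\rangle=0$ for basis elements outside $P_{\infty,k}^{*}(A_k)$ -- supplies the justification for the sum interchange that the paper leaves implicit. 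No gaps.
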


\begin{proof}
First, we show that $E$ described below is a resolution of the identity:

Let $B(L^2(X_{\infty}))$ denote the collection of all the bounded 
linear operators of $L^2(X_{\infty})$ to $L^2(X_{\infty})$.
Define $E:\mathcal{M}\to B(L^2(X_{\infty}))$ by 
\[ 
\omega~
\mapsto~
\left(
f\mapsto\sum_{e\in\mathcal{A},\, \lambda(e) \in \omega}\langle f, e\rangle \cdot e 
\right)
\]
for each Lebesgue measurable set $\omega \in\mathcal{M}$. 

Now we check the conditions for $E$ to be a resolution of the identity. 
By definition, $E(\emptyset)=(f\mapsto \sum_{\lambda(e)\in\emptyset}\langle f,
e\rangle \cdot e)=0$. Proposition 4.1 implies that for each $f\in L^2(X_{\infty})$,
$f=\sum_{e\in\mathcal{A}} \langle f, e\rangle \cdot e$. So $E(\Rbb)=(f\mapsto \sum_{\lambda(e)\in\Rbb}\langle f, e\rangle \cdot e)=id$. 

For each $f\in L^2(X_{\infty})$ and each $\omega',\omega''\in\mathcal{M}$,

\begin{eqnarray*}
E(\omega')\circ E(\omega'')(f)&=&\sum_{\lambda(e')\in\omega'}
\left\langle \sum_{\lambda(e'')\in \omega''}\langle f,e''\rangle\cdot e'',\, e' 
\right\rangle\cdot e'\\
&=&\sum_{\lambda(e')\in\omega'}\sum_{\lambda(e'')\in \omega''}\langle f,e''\rangle\cdot \langle e'',e'\rangle\cdot e'\\
&=&\sum_{\lambda(e')\in\omega'\cap\omega''}\langle f,e'\rangle\cdot e'\\
&=&E(\omega'\cap\omega'')(f).
\end{eqnarray*}

Notice that the $L^2$ norm of $f$ is finite, so the series above is absolutely
convergent. Therefore the order of the sum can be changed in the above formula.
So $E(\omega')\circ E(\omega'')=E(\omega'\cap\omega'')$.

For each $\omega',\omega''\in\mathcal{M}$,
if $\omega'\cap\omega''=\emptyset$, 
by definition, $E(\omega'\cup\omega'')=E(\omega')+E(\omega'')$.

For each $f,u\in Dom(\Delta_{X_{\infty}})$, 
define $E_{f,u}: \Mcal \to \Cbb$ by 
$E_{f, u}(\omega)=\langle E(\omega)f,u \rangle$. 
Then $E_{f,u}(\omega)=\langle \sum_{\lambda(e)\in\omega}\langle f,e\rangle \cdot e, u\rangle$ 
for every $\omega\in\mathcal{M}$. $E_{f,u}$ satisfies that $E_{f,u}(\emptyset)=0$.
And for each sequence $\{\omega_s\}_{s=1}^{\infty}$ of disjoint sets in
$\mathcal{M}$, we have 
\begin{align*}
E_{f,u}(\bigcup_s\omega_s)
&=\left\langle
\sum_{\lambda(e)\in\bigcup_s\omega_s}\langle f,e\rangle \cdot e,\,
u \right\rangle
=
\left\langle 
\sum_s\sum_{\lambda(e)\in\omega_s}\langle f,\, e\rangle \cdot e,\,
u\right\rangle\nonumber\\
&=\sum_s\left\langle 
\sum_{\lambda(e)\in\omega_s}\langle f,\, e\rangle \cdot e,\,
u\right\rangle
=
\sum_s E_{f,u}(\omega_s).
\end{align*}

Since the $L^2$ norm of $f$ and $u$ is finite, the series above is absolutely convergent. Therefore the second equality in the above formula is true. We see that $E_{f,u}$ satisfies the above two conditions of a measure. Therefore $E_{f,u}$ is a measure for every
$f,u\in L^2(X_{\infty})$.

So $E$ is a resolution of the identity.

Second, we show that $E$ defined above is the resolution of the identity that corresponds to $\Delta_{X_{\infty}}$.

For each $f,u\in Dom(\Delta_{X_{\infty}})$, 
\begin{align*}
\int_{-\infty}^{\infty} t\, d E_{f,u}(t)
&=\sum_{e\in\mathcal{A}} \lambda(e)\cdot\langle\langle f, e\rangle\cdot e, u\rangle
=\langle \sum_{e\in\mathcal{A}} \lambda(e)\cdot\langle f, e\rangle\cdot e, u\rangle\\
&=\langle \sum_{e\in\mathcal{A}} \langle f, e\rangle\cdot \Delta_{X_{\infty}}e, u\rangle
=\langle \Delta_{X_{\infty}}f,u\rangle.
\end{align*}

Then the uniqueness implies that $E$ is the resolution of the identity that corresponds to $\Delta_{X_{\infty}}$ \cite[Theorem 13.30]{Rudin91}.
\end{proof}

Notice that by definition $E$ is supported on $\overline{\bigcup_i \sigma(\Delta_{X_i})}$.

\begin{coro}
$\sigma(\Delta_{X_{\infty}})=\overline{\bigcup_i \sigma(\Delta_{X_i})}$.
\end{coro}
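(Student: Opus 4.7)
The plan is to prove the two inclusions separately, drawing on the explicit formula for the spectral measure $E$ from Theorem 4.2 and the pullback property of eigenfunctions from Corollary 3.3. The key underlying fact is the standard characterization of the spectrum of a self-adjoint operator as the support of its resolution of the identity: $\lambda \in \sigma(\Delta_{X_\infty})$ if and only if $E(U) \neq 0$ for every open neighborhood $U$ of $\lambda$.

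For the inclusion $\sigma(\Delta_{X_\infty}) \subseteq \overline{\bigcup_i \sigma(\Delta_{X_i})}$, I would invoke the remark immediately preceding the corollary together with the formula in Theorem 4.2. That formula shows that $E(\omega)$ is a sum indexed by basis elements $e \in \bigcup_k P_{\infty,k}^*(A_k)$ whose eigenvalues $\lambda(e)$ lie in $\omega$, and each such $\lambda(e)$ is by construction an eigenvalue of some $\Delta_{X_k}$. Hence the support of $E$ is contained in $\overline{\bigcup_i \sigma(\Delta_{X_i})}$. If $\lambda$ lies outside this closed set, one can choose an open neighborhood $U$ of $\lambda$ disjoint from it, and then the indexing set in the formula for $E(U)$ is empty, forcing $E(U)=0$ and therefore $\lambda \notin \sigma(\Delta_{X_\infty})$.

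For the reverse inclusion $\overline{\bigcup_i \sigma(\Delta_{X_i})} \subseteq \sigma(\Delta_{X_\infty})$, I would argue pointwise and then take a closure. Fix $i$ and take $\lambda \in \sigma(\Delta_{X_i})$. Since $X_i$ is a closed manifold, $\lambda$ is an eigenvalue of $\Delta_{X_i}$ with some eigenfunction $h \in L^2(X_i)$. By Corollary 3.3, the pullback $P_{\infty,i}^*(h)$ is an eigenfunction of $\Delta_{X_\infty}$ with the same eigenvalue $\lambda$, and it is nonzero by the isometry property of $P_{\infty,i}^*$ in Corollary 2.7. Therefore $\lambda \in \sigma(\Delta_{X_\infty})$. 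Taking the union over $i$ and passing to the closure, using that the spectrum of a self-adjoint operator is closed, yields the inclusion.

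I do not anticipate a serious obstacle, since Theorem 4.2 and Corollary 3.3 already carry essentially all the weight of the argument; the corollary is effectively a repackaging of the support statement for $E$ together with the eigenfunction-pullback construction. The only real point to verify is that the explicit formula for $E$ rules out spectral mass outside $\overline{\bigcup_i \sigma(\Delta_{X_i})}$, which is immediate from the definition of $\mathcal{A}$ and the way the orthonormal collections $A_k$ are assembled from eigenfunctions of $\Delta_{X_k}$.
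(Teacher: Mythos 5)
Your proof is correct and follows essentially the same route as the paper, which derives the corollary from the single observation that the resolution of the identity $E$ of Theorem 4.2 is supported exactly on $\overline{\bigcup_i\sigma(\Delta_{X_i})}$ together with the fact that the spectrum of a self-adjoint operator is the support of its spectral measure. Your forward inclusion is precisely that observation, and your reverse inclusion via pullbacks of eigenfunctions (Corollary 3.3) is a slightly more explicit but equivalent way of seeing that every point of the support contributes, so there is nothing to add.
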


Now we consider the more general case where $X_1$ is complete.

For each integer $i$, let $E^i$ denote the resolution of the identity for $\Delta_{X_i}$. 

\begin{prop}
For each $\omega\in\mathcal{M}$ and each $i\in\Nbb$, 
\[P_{i+1,i}^*\circ E^i(\omega)=E^{i+1}(\omega)\circ P_{i+1,i}^*.\] 
\end{prop}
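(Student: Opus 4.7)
The plan is to lift the intertwining of the Laplacians through three levels: from smooth compactly supported functions up to the closed self-adjoint operators, then to the bounded resolvents, and finally to the full bounded Borel functional calculus, which contains the spectral projections $E^i(\omega)$. Because $P_{i+1,i}: X_{i+1} \to X_i$ is a finite covering and a local isometry, the preimage of a compact set is compact, so $P_{i+1,i}^*(C_c^{\infty}(X_i)) \subset C_c^{\infty}(X_{i+1})$, and the same local computation as in the proof of Proposition 3.2 gives $\Delta_{X_{i+1}} P_{i+1,i}^*(\alpha) = P_{i+1,i}^*(\Delta_{X_i}\alpha)$ for every $\alpha \in C_c^{\infty}(X_i)$. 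Since $P_{i+1,i}^*: L^2(X_i) \to L^2(X_{i+1})$ is an isometry by Corollary 2.7, Lemma 3.10 applies with $\phi = P_{i+1,i}^*$ and yields $P_{i+1,i}^*(\mathrm{Dom}(\Delta_{X_i})) \subset \mathrm{Dom}(\Delta_{X_{i+1}})$ together with the identity $\Delta_{X_{i+1}} \circ P_{i+1,i}^* = P_{i+1,i}^* \circ \Delta_{X_i}$ on $\mathrm{Dom}(\Delta_{X_i})$.

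For the second level, fix $z \in \Cbb \setminus \Rbb$ and write $R_k(z) = (\Delta_{X_k} - z)^{-1}$ for the resolvent. Given $v \in L^2(X_i)$, setting $w = R_i(z) v \in \mathrm{Dom}(\Delta_{X_i})$ gives $(\Delta_{X_i} - z) w = v$; applying $P_{i+1,i}^*$ and using the operator intertwining yields $(\Delta_{X_{i+1}} - z)(P_{i+1,i}^* w) = P_{i+1,i}^* v$, so applying $R_{i+1}(z)$ produces the resolvent identity $P_{i+1,i}^* R_i(z) = R_{i+1}(z) P_{i+1,i}^*$ as bounded operators $L^2(X_i) \to L^2(X_{i+1})$.

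For the third level, let $\mathcal{F}$ denote the family of bounded Borel $\varphi: \Rbb \to \Cbb$ satisfying $P_{i+1,i}^* \varphi(\Delta_{X_i}) = \varphi(\Delta_{X_{i+1}}) P_{i+1,i}^*$. This family is linear, closed under pointwise bounded convergence (by the dominated convergence theorem applied to the scalar spectral measures $E^i_{v,v}$ and $E^{i+1}_{P_{i+1,i}^* v, P_{i+1,i}^* v}$, together with the boundedness of $P_{i+1,i}^*$), and by the second level contains every resolvent $t \mapsto (t-z)^{-1}$. Because the $*$-algebra generated by the resolvents is uniformly dense in $C_0(\Rbb)$ by Stone-Weierstrass, and the pointwise bounded closure of $C_0(\Rbb)$ exhausts all bounded Borel functions, we conclude that $\mathcal{F}$ contains every bounded Borel function, in particular the characteristic function $\chi_\omega$ of every Borel $\omega$. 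Since the spectral measures $E^i$ and $E^{i+1}$ extend uniquely to their Lebesgue completions and $\chi_\omega(\Delta_{X_k}) = E^k(\omega)$, the claimed identity $P_{i+1,i}^* \circ E^i(\omega) = E^{i+1}(\omega) \circ P_{i+1,i}^*$ follows for every $\omega \in \mathcal{M}$. I expect this monotone-class extension from resolvents to arbitrary spectral projections to be the only subtle point; the first two levels are immediate from Proposition 3.2, Lemma 3.10, and Corollary 2.7.
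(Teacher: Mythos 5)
Your proof is correct, but it takes a genuinely different route from the paper's. The paper argues at the level of matrix coefficients: from $P_{i+1,i}^*\circ\Delta_{X_i}=\Delta_{X_{i+1}}\circ P_{i+1,i}^*$ it deduces $\int t\,dE^i_{\alpha,\beta}=\int t\,dE^{i+1}_{P^*\alpha,P^*\beta}$ for $\alpha\in Dom(\Delta_{X_i})$, asserts (citing the symbolic calculus) that the same identity holds with $t$ replaced by any essentially bounded function, concludes that the two complex measures coincide, and then reads off the intertwining by testing against vectors $P_{i+1,i}^*\beta$. You instead climb through operators: the closed-operator intertwining via Lemma 3.10, then the resolvent identity $P_{i+1,i}^*R_i(z)=R_{i+1}(z)P_{i+1,i}^*$, then Stone--Weierstrass plus a bounded-pointwise-convergence (monotone class) argument to reach all bounded Borel functions, in particular $\chi_\omega$. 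Your version is longer but buys genuine rigor at exactly the two places where the paper is thin: (i) equality of the first moments $\int t\,d\mu$ of two measures does not by itself determine the measures, and justifying the jump to ``all essentially bounded functions'' requires knowing $P^*$ intertwines $f(\Delta)$ --- which is what is being proved; your resolvent step supplies the missing bounded-operator identity from which the extension legitimately proceeds; (ii) the paper's final step only compares the two sides against vectors in $P_{i+1,i}^*(L^2(X_i))$, so it implicitly needs $E^{i+1}(\omega)$ to preserve that subspace (Corollary 4.5, which the paper derives \emph{from} this proposition), whereas your identity is an identity of operators from the start. Two small points to tighten: you should note explicitly that your family $\mathcal{F}$ is closed under products (a one-line computation, needed before invoking the $*$-algebra generated by resolvents), and the passage from Borel sets to the Lebesgue $\sigma$-algebra $\mathcal{M}$ is not really a matter of ``completion'' since a spectral measure may charge Lebesgue-null Borel sets --- but that ambiguity is inherited from the paper's choice of $\mathcal{M}$ and is not a defect of your argument.
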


\begin{proof}
The definition of $\Delta_{X_i}$ and $\Delta_{X_{i+1}}$ implies that
$P_{i+1,i}^*\circ \Delta_{X_i}=\Delta_{X_{i+1}}\circ P_{i+1,i}^*$. So for each
$\alpha\in Dom(\Delta_{X_i})$ and $\beta\in L^2(X_i)$,
\[\langle P_{i+1,i}^*(\Delta_{X_i}\alpha),P_{i+1,i}^* (\beta)\rangle
=\langle \Delta_{X_{i+1}}( P_{i+1,i}^*(\alpha)),P_{i+1,i}^*
(\beta)\rangle.\]

By properties of $E^{i+1}$,
\[\langle \Delta_{X_{i+1}}(P_{i+1,i}^*(\alpha)),P_{i+1,i}^* (\beta)\rangle
=\int_{\Rbb} t\, d E^{i+1}_{P_{i+1,i}^*(\alpha),P_{i+1,i}^* (\beta)}(t).\]

Corollary 2.10 implies that $\langle P_{i+1,i}^*(\Delta_{X_i}\alpha),P_{i+1,i}^* (\beta)\rangle=\langle \Delta_{X_i}\alpha,\beta\rangle$. 

By properties of $E^i$,
\[\langle \Delta_{X_i}\alpha,\beta\rangle=\int_{\Rbb}t\, d E^i_{\alpha,\beta}(t).\]

The above formulas imply that \[\int_{\Rbb}t\, d E^i_{\alpha,\beta}(t)=\int_{\Rbb} t\, d E^{i+1}_{P_{i+1,i}^*(\alpha),P_{i+1,i}^* (\beta)}(t).\]

The above equation is true for not only $t$ but also functions that are essentially bounded \cite[Theorem 12.21]{Rudin91}. In particular, each compactly supported continuous function $\zeta$ on $\Rbb$ satisfies the equation:
\[\int_{\Rbb}\zeta(t)\, d E^i_{\alpha,\beta}(t)=\int_{\Rbb} \zeta(t)\, d E^{i+1}_{P_{i+1,i}^*(\alpha),P_{i+1,i}^* (\beta)}(t).\]

So for each $\omega\in \mathcal{M}$, $E^i_{\alpha,\beta}(\omega)=E^{i+1}_{P_{i+1,i}^*(\alpha),P_{i+1,i}^* (\beta)}(\omega)$. Then
\[ \langle E^i(\omega)\alpha,\beta\rangle=\langle E^{i+1}(\omega)P_{i+1,i}^*(\alpha),P_{i+1,i}^* (\beta)\rangle.\]

Again by Corollary 2.10,
\[\langle E^i(\omega)\alpha,\beta\rangle=\langle P_{i+1,i}^*E^{i}(\omega)(\alpha),P_{i+1,i}^* (\beta)\rangle.\]

So the following equality holds:
\[\langle P_{i+1,i}^*E^{i}(\omega)(\alpha),P_{i+1,i}^* (\beta)\rangle=
\langle E^{i+1}(\omega)P_{i+1,i}^*(\alpha),P_{i+1,i}^* (\beta)\rangle.\]

The conclusion follows.
\end{proof}

For each integer $i$, the space $P_{i+1,i}^*(L^2(X_i))$ is a closed subspace of $L^2(X_{i+1})$. Let $W_{i+1}$ denote the subspace $P_{i+1,i}^*(L^2(X_i))$. Let $V_{i+1}$ denote $P_{i+1,i}^*(L^2(X_i))^{\perp}$. Then $L^2(X_{i+1})=W_{i+1}\bigoplus V_{i+1}$. Let $V_1$ denote $L^2(X_1)$.

\begin{coro}
For each integer $i$ and each $\omega\in\mathcal{M}$, $E^{i+1}(\omega)$ maps $W_{i+1}$ to $W_{i+1}$ and maps $V_{i+1}$ to $V_{i+1}$.
\end{coro}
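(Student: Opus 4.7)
The plan is to derive both invariance statements from the intertwining relation
\[ P_{i+1,i}^* \circ E^i(\omega) = E^{i+1}(\omega) \circ P_{i+1,i}^* \]
established in the preceding proposition, combined with the basic fact that each $E^{i+1}(\omega)$ is a self-adjoint (orthogonal) projection on $L^2(X_{i+1})$.

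For the invariance of $W_{i+1}$, I would take an arbitrary $w\in W_{i+1}$ and write $w=P_{i+1,i}^*(\alpha)$ for some $\alpha\in L^2(X_i)$. Applying the intertwining relation directly yields
\[ E^{i+1}(\omega)w \;=\; E^{i+1}(\omega)P_{i+1,i}^*(\alpha) \;=\; P_{i+1,i}^*\bigl(E^i(\omega)\alpha\bigr), \]
which manifestly lies in $P_{i+1,i}^*(L^2(X_i))=W_{i+1}$.

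For the invariance of $V_{i+1}=W_{i+1}^{\perp}$, I would invoke the standard observation that a self-adjoint operator preserves the orthogonal complement of any invariant closed subspace. Explicitly, for $v\in V_{i+1}$ and any $w\in W_{i+1}$, the self-adjointness of $E^{i+1}(\omega)$ gives $\langle E^{i+1}(\omega)v,w\rangle=\langle v,E^{i+1}(\omega)w\rangle$, and the right-hand side vanishes because $E^{i+1}(\omega)w\in W_{i+1}$ by the first half of the argument. Hence $E^{i+1}(\omega)v$ is orthogonal to every element of $W_{i+1}$, i.e., $E^{i+1}(\omega)v\in V_{i+1}$.

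I do not anticipate a genuine obstacle here: the substantive analytic content is absorbed into the intertwining relation of the previous proposition, and what remains is essentially bookkeeping with adjoints. The one small point worth confirming is that $W_{i+1}$ is actually a closed subspace, so that $V_{i+1}=W_{i+1}^{\perp}$ really is its Hilbert-space orthogonal complement; this is immediate from $P_{i+1,i}^*$ being an $L^2$-isometry via Corollary \ref{pullback of L^2 is L^2}.
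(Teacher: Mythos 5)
Your argument is correct and is essentially the paper's own proof: the first half applies the intertwining relation from Proposition 4.4 to see that $E^{i+1}(\omega)$ preserves $W_{i+1}$, and the second half uses self-adjointness of the spectral projection to pass to the orthogonal complement $V_{i+1}$ (a step the paper leaves implicit but you spell out). No gaps.
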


\begin{proof}
For each $h\in L^2(X_i)$, we have $P_{i+1,i}^*(h)\in W_{i+1}$. Proposition 4.4 implies that $E^{i+1}(\omega)(P_{i+1,i}^*(h))=P_{i+1,i}^*(E^i(\omega)h)$ which lies in $W_{i+1}$. So $E^{i+1}(\omega)$ maps $W_{i+1}$ to $W_{i+1}$. Since $V_{i+1}=W_{i+1}^{\perp}$, $E^{i+1}(\omega)$ maps $V_{i+1}$ to $V_{i+1}$.
\end{proof}

For each $i$, let $Q_i$ denote the composition of the projection map $L^2(X_{\infty})\to P_{\infty,i}^*(V_i)$ and the identification map $P_{\infty,i}^*(V_i)\to V_i$.

\begin{thm}
The resolution of the identity $E$ for $\Delta_{X_{\infty}}$ is 
given by the following formula 
\[ 
\omega~
\mapsto~
\left(
f\mapsto\sum_i P_{\infty,i}^*(E^i(\omega)|_{V_i}Q_i(f)
\right).
\]
\end{thm}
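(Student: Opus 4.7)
The strategy is to verify that the formula defines a projection-valued measure $E$ and that the associated self-adjoint operator $T_E := \int_\Rbb t\, dE(t)$ agrees with $\Delta_{X_\infty}$ on the core $\mathcal{D} := \bigcup_i P_{\infty,i}^*(C_c^\infty(X_i))$; essential self-adjointness from Proposition 3.8 and uniqueness of the spectral resolution in Rudin's Theorem 13.30 then force $E$ to be the resolution of the identity for $\Delta_{X_\infty}$. First I would assemble the orthogonal direct sum $L^2(X_\infty) = \overline{\bigoplus_i P_{\infty,i}^*(V_i)}$ from Proposition 3.8, writing each $f \in L^2(X_\infty)$ uniquely as $\sum_i P_{\infty,i}^*(Q_i(f))$ with $\|f\|^2 = \sum_i \|Q_i(f)\|^2$ by Corollary 2.10. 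Corollary 4.5 asserts that each $V_i$ is invariant under $E^i(\omega)$, so the formula presents $E(\omega)$ as the Hilbert direct sum of the restricted projections $E^i(\omega)|_{V_i}$ on the mutually orthogonal summands. From this block-diagonal structure the projection-valued measure axioms are inherited: $E(\emptyset) = 0$ and $E(\Rbb) = \mathrm{id}$ are immediate, multiplicativity $E(\omega_1)E(\omega_2) = E(\omega_1 \cap \omega_2)$ follows from the corresponding identity for each $E^i$ together with orthogonality of the summands, and countable additivity of $E_{f,u}(\omega) = \sum_i \langle E^i(\omega)Q_i(f), Q_i(u)\rangle$ follows from dominated convergence with the summable Cauchy--Schwarz majorant $\|Q_i(f)\|\cdot\|Q_i(u)\|$.

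Next I would verify that $T_E$ agrees with $\Delta_{X_\infty}$ on $\mathcal{D}$. Iterating Corollary 4.5 and Proposition 4.4 gives the reducing decomposition $L^2(X_i) = \bigoplus_{j\le i} P_{i,j}^*(V_j)$ for $\Delta_{X_i}$, so for $\alpha \in C_c^\infty(X_i)$ its components $\alpha_j \in V_j$ lie in $\mathrm{Dom}(\Delta_{X_j})$ with $\Delta_{X_i}\alpha = \sum_{j\le i} P_{i,j}^*(\Delta_{X_j}\alpha_j)$. For $f = P_{\infty,i}^*(\alpha)$ one finds $Q_j(f) = \alpha_j$ for $j \le i$ and $Q_j(f) = 0$ otherwise; combining with Proposition 3.2 shows that both $T_E f$ and $\Delta_{X_\infty} f$ equal $\sum_{j\le i} P_{\infty,j}^*(\Delta_{X_j}\alpha_j)$. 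Since $T_E$ is a self-adjoint operator agreeing with the essentially self-adjoint $\Delta_{X_\infty}|_{\mathcal{D}}$, we must have $T_E = \overline{\Delta_{X_\infty}}$, and the uniqueness clause in Rudin's Theorem 13.30 identifies $E$ as the resolution of the identity for $\Delta_{X_\infty}$.

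The hardest step will be justifying that $P_{i,j}^*(V_j)$ is a reducing subspace for $\Delta_{X_i}$ whose restricted Laplacian has spectral measure $E^j|_{V_j}$ pulled back via $P_{i,j}^*$. Proposition 4.4 delivers this at the level of bounded spectral projections, but transferring it to the unbounded $\Delta_{X_i}$ requires a functional-calculus argument: the cleanest route is through the bounded resolvents $(1 + \Delta_{X_i})^{-1}$, applying Lemma 3.10 with Corollary 3.11 to conclude that these commute with the orthogonal projection onto $P_{i,j}^*(V_j)$, which then forces $\mathrm{Dom}(\Delta_{X_i}) \cap P_{i,j}^*(V_j)$ to be dense and $\Delta_{X_i}$-invariant and yields the required compatibility of spectral measures.
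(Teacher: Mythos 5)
Your proposal is correct, and its first half (verifying the projection-valued-measure axioms from the block-diagonal structure over the mutually orthogonal summands $P_{\infty,i}^*(V_i)$, with the Cauchy--Schwarz majorant for countable additivity) is essentially the paper's argument. Where you diverge is in identifying $E$ with the spectral measure of $\Delta_{X_\infty}$. You prove the \emph{strong} identity $T_E f=\Delta_{X_\infty}f$ on the core by decomposing $\alpha\in C_c^\infty(X_k)$ into its $V_j$-components $\alpha_j$ and asserting $\alpha_j\in Dom(\Delta_{X_j})$ with $\Delta_{X_k}\alpha=\sum_{j\le k}P_{k,j}^*(\Delta_{X_j}\alpha_j)$; as you correctly flag, this forces you to establish that each $P_{k,j}^*(V_j)$ is a reducing subspace for the unbounded $\Delta_{X_k}$ and that the reduced operator is conjugate to $\Delta_{X_j}|_{V_j}$, which requires the resolvent/functional-calculus upgrade of Proposition 4.4 that you sketch. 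The paper avoids this entirely: it checks only the \emph{weak} identity $\int_{\Rbb}t\,dE_{f,u}(t)=\langle\Delta_{X_\infty}f,u\rangle$ for $f=P_{\infty,k}^*(\alpha)$, $u=P_{\infty,k}^*(\beta)$, and rather than pushing $\alpha$ \emph{down} into the $V_j$, it pushes the spectral projections \emph{up}: using $P_{\infty,j}^*=P_{\infty,k}^*P_{k,j}^*$ and the bounded intertwining $P_{k,j}^*E^j(\omega)=E^k(\omega)P_{k,j}^*$ from (iterated) Proposition 4.4, the sum telescopes via $\sum_{j\le k}P_{k,j}^*Q_j(P_{\infty,k}^*(\alpha))=\alpha$ to give $E(\omega)f=P_{\infty,k}^*(E^k(\omega)\alpha)$, after which the spectral identity for $E^k$ applied to $\alpha\in C_c^\infty(X_k)$ finishes the job with no domain questions about the components $\alpha_j$. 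Both routes land at the same application of Rudin's uniqueness theorem; yours buys an explicit block-diagonalization of $\overline{\Delta_{X_\infty}}$ at the cost of the reducing-subspace lemma, while the paper's is the more economical path. If you keep your route, make the reducing-subspace step explicit (e.g.\ via commutation of $(1+\Delta_{X_i})^{-1}$ with the orthogonal projection onto $P_{i,j}^*(L^2(X_j))$), since it is the only place where your argument goes beyond what the cited propositions literally state.
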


\begin{proof}
First, we show that $E$ described below is a resolution of the identity:

Define $E:\mathcal{M}\to B(L^2(X_{\infty}))$ by $\omega\mapsto(f\mapsto \sum_i P_{\infty,i}^*(E^i(\omega)|_{V_i}Q_i(f))$. Corollary 4.5 implies that image of $E^i(\omega)|_{V_i}$ lies in $V_i$. So the image of $P_{\infty,i}^*(E^i(\omega)|_{V_i})$ lies in $P_{\infty,i}^*(V_i)$. Since all $P_{\infty,i}^*(V_i)$ are subspaces of $L^2(X_{\infty})$ and are mutually orthogonal to each other, the sum makes sense and lies in $L^2(X_{\infty})$. 

Now we check all the conditions for $E$ to be a resolution of the identity. By definition, $E(\emptyset)=0$. $E(\Rbb)(f)$ is the sum of projections of $f$ onto $P_{\infty,i}^*(V_i)$. Proposition 2.9 implies that the sum of projections of $f$ onto all $P_{\infty,i}^*(V_i)$ is equal to $f$. So $E(\Rbb)=id$. 

For each $\omega'$, $\omega''\in\mathcal{M}$ and each $f\in L^2(X_{\infty})$, by definition of $E$:
\begin{align*}
\quad E(\omega')\circ E(\omega'')(f)
&=E(\omega')\big[\sum_j P_{\infty,j}^*(E^j(\omega'')|_{V_j}Q_j(f))\big] \\
&=\sum_j E(\omega')\big[P_{\infty,j}^*(E^j(\omega'')|_{V_j}Q_j(f))\big]\\
&=\sum_{j,i} P_{\infty,i}^*(E^i(\omega')|_{V_i}\big[Q_i(  P_{\infty,j}^*(E^j(\omega'')|_{V_j}Q_j(f)) )\big]). 
\end{align*}

We apply the orthogonality of all $P_{\infty,k}^*(V_k)$:
\begin{align*}
\quad E(\omega')\circ E(\omega'')(f) 
&=\sum_{j,i} P_{\infty,i}^*(E^i(\omega')|_{V_i}\big[Q_i(  P_{\infty,j}^*(E^j(\omega'')|_{V_j}Q_j(f)) )\big])\\
&=\sum_i P_{\infty,i}^*(E^i(\omega')|_{V_i}\big[Q_i( P_{\infty,i}^*(E^i(\omega'')|_{V_i}Q_i(f)) )\big]) \\
&=\sum_i P_{\infty,i}^*(E^i(\omega')|_{V_i}\big[  E^i(\omega'')|_{V_i}Q_i(f)\big]) \\
&=\sum_i P_{\infty,i}^*(E^i(\omega'\cap\omega'')|_{V_i}Q_i(f)) 
=E(\omega'\cap\omega'')(f).
\end{align*}

Notice by definition of $Q_i$, we have $Q_i\circ P_{\infty,i}^*=id$. So the third equality above is true. Therefore $E(\omega'\cap\omega'')=E(\omega')\circ E(\omega'')$.

For each $\omega',\omega''\in\mathcal{M}$, if $\omega'\cap\omega''=\emptyset$, the definition of $E$ implies that $E(\omega'\cup\omega'')=E(\omega')+E(\omega'')$.

For each $f,u\in L^2(X_{\infty})$, 
define $E_{f,u}: \Mcal \to \Cbb$ by 
$E_{f, u}(\omega)=\langle E(\omega)f,u \rangle$. $E_{f,u}$ satisfies that $E_{f,u}(\emptyset)=\langle E(\emptyset)f,u \rangle=0$.
And for each sequence $\{\omega_s\}_{s=1}^{\infty}$ of disjoint sets in
$\mathcal{M}$, we have 
\[ 
E_{f,u}(\bigcup_s\omega_s)
~=\left\langle E(\bigcup_s\omega_s)f\,,
u \right\rangle~
=~
\left\langle 
\sum_s E(\omega_s)f\,,\,
u\right\rangle~ 
=~
\sum_s E_{f,u}(\omega_s).
\]

The second equality is true because, for each summand of $E$, $E^i$ satisfies the equation $\langle E^i(\cup_s\omega_s)Q_i(f),Q_i(u)\rangle=\sum_s \langle E^i(\omega_s)Q_i(f),Q_i(u)\rangle$. Since the $L^2$ norm of $f$ is finite, the series above is absolutely convergent. Therefore the order of the sum in the above formula can be changed. 

As a result, $E_{f,u}$ satisfies the two conditions for a measure. Therefore $E_{f,u}$ is a measure for every
$f,u\in L^2(X_{\infty})$.

So $E$ defined above is a resolution of the identity.

Second, we show that the $E$ defined above is the resolution of the identity that corresponds to $\Delta_{X_{\infty}}$. It suffices to check if the equation $\int_{\Rbb}t \, d E_{f,u}(t)=\langle \Delta_{X_{\infty}}f,u\rangle$ holds for every $f,u\in Dom(\Delta_{X_{\infty}})$. 

Since the collection of finite linear combinations of pullbacks of $L^2(X_i)$ functions, $i\in\Nbb$, is a dense subset of $L^2(X_{\infty})$ and $C_c^{\infty}(X_i)$ is a dense subset of $L^2(X_i)$ for each $i$, it suffices to check for the case where $f=P_{\infty,k}^*(\alpha)$ and $u=P_{\infty,k}^*(\beta)$ for some integer $k$ and $\alpha,\beta\in C_c^{\infty}(X_k)$. 

Notice that for every $\omega\in\mathcal{M}$,

\begin{align*}
E(\omega)f&=\sum_i P_{\infty,i}^*(E^i(\omega)Q_i(f))\\
&=\sum_{i\leq k} P_{\infty,i}^*(E^i(\omega)Q_i(P_{\infty,k}^*(\alpha)))+0\\
&=\sum_{i\leq k} P_{\infty,k}^*P_{k,i}^*(E^i(\omega)Q_i(P_{\infty,k}^*(\alpha)))\\
&=\sum_{i\leq k} P_{\infty,k}^*(E^k(\omega)P_{k,i}^*Q_i(P_{\infty,k}^*(\alpha)))\\
&=P_{\infty,k}^*(E^k(\omega)\sum_{i\leq k}P_{k,i}^*Q_i(P_{\infty,k}^*(\alpha)))\\
&=P_{\infty,k}^*(E^k(\omega)\alpha).
\end{align*}

As a result, 
\[E_{f,u}(\omega)
=\langle E(\omega)f,u\rangle
=\langle P_{\infty,k}^*(E^k(\omega)\alpha),P_{\infty,k}^*(\beta)\rangle
=\langle E^k(\omega)\alpha,\beta\rangle
=E^k_{\alpha,\beta}(\omega).
\]

So 
\begin{align*}
&\int_{\Rbb}t \, d E_{f,u}(t)
=\int_{\Rbb}t \, d E^k_{\alpha,\beta}(t)
=\langle \Delta_{X_k}\alpha,\beta\rangle\\
&=\langle P_{\infty,k}^*\Delta_{X_k}\alpha,P_{\infty,k}^*\beta\rangle
=\langle \Delta_{X_{\infty}}P_{\infty,k}^*\alpha,P_{\infty,k}^*\beta\rangle
=\langle \Delta_{X_{\infty}}f,u\rangle.
\end{align*}

So $E$ is the resolution of the identity that corresponds to $\Delta_{X_{\infty}}$ from the uniqueness by \cite[Theorem 13.30]{Rudin91}. 
\end{proof}

Notice that by definition $E$ is supported on $\overline{\bigcup_i \sigma(\Delta_{X_i})}$, which implies the following:

\begin{thm}\label{mainTheorem}
If $X_1$ is a complete Riemannian manifold, then $\sigma(\Delta_{X_{\infty}})=\overline{\bigcup_i\sigma(\Delta_{X_{i}})}$.
\end{thm}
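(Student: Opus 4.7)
The plan is to derive Theorem~\ref{mainTheorem} directly from the explicit formula for the spectral measure given in Theorem~4.6, together with the standard spectral-theoretic fact that for a self-adjoint operator $T$ with projection-valued measure $E$, the spectrum $\sigma(T)$ equals the support of $E$: a real number $\lambda$ lies in $\sigma(T)$ if and only if $E(U)\neq 0$ for every open neighborhood $U$ of $\lambda$. So my goal reduces to identifying the support of the measure $E$ from Theorem~4.6 as $\overline{\bigcup_i\sigma(\Delta_{X_i})}$.

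For the inclusion $\sigma(\Delta_{X_\infty})\subset\overline{\bigcup_i\sigma(\Delta_{X_i})}$, I would take $\lambda$ outside the closure and choose an open neighborhood $U\ni\lambda$ disjoint from every $\sigma(\Delta_{X_i})$. Then $E^i(U)=0$ for each $i$, hence $E^i(U)|_{V_i}=0$, and the formula $E(U)f=\sum_i P_{\infty,i}^*\bigl(E^i(U)|_{V_i}Q_i(f)\bigr)$ shows $E(U)=0$. Thus $\lambda\notin\sigma(\Delta_{X_\infty})$.

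For the reverse inclusion, since $\sigma(\Delta_{X_\infty})$ is closed it is enough to show $\sigma(\Delta_{X_i})\subset\sigma(\Delta_{X_\infty})$ for every $i$. I would iterate the orthogonal splitting $L^2(X_i)=W_i\oplus V_i$, with $W_i=P_{i,i-1}^*(L^2(X_{i-1}))$, to produce a decomposition
\[L^2(X_i)=P_{i,1}^*(V_1)\oplus P_{i,2}^*(V_2)\oplus\cdots\oplus P_{i,i-1}^*(V_{i-1})\oplus V_i.\]
By Proposition~4.4 applied inductively, each summand $P_{i,j}^*(V_j)$ is $E^i$-invariant and $E^i$ acts there as the pullback of $E^j|_{V_j}$ under $P_{i,j}^*$. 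Hence $\sigma(\Delta_{X_i})=\bigcup_{j\leq i}\sigma(\Delta_{X_j}|_{V_j})$, so any $\lambda\in\sigma(\Delta_{X_i})$ is a spectral value of some $\Delta_{X_j}|_{V_j}$ with $j\leq i$. For such $\lambda$ and any open $U\ni\lambda$, I would pick $g\in V_j$ with $E^j(U)|_{V_j}g\neq 0$ and set $f=P_{\infty,j}^*(g)$; then $Q_j(f)=g$ and $Q_i(f)=0$ for $i\neq j$, so the sum in Theorem~4.6 collapses to $E(U)f=P_{\infty,j}^*\bigl(E^j(U)|_{V_j}g\bigr)$, which is nonzero by the isometry property of Corollary~2.7. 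Hence $\lambda\in\sigma(\Delta_{X_\infty})$.

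The main subtlety is that the formula for $E$ involves only the restrictions $E^i|_{V_i}$ rather than the full $E^i$, so one must verify that $\bigcup_i\sigma(\Delta_{X_i}|_{V_i})=\bigcup_i\sigma(\Delta_{X_i})$; the orthogonal decomposition above, together with the intertwining from Proposition~4.4, is precisely what makes these two sets agree, and once that is in hand the theorem is an immediate consequence of the identification of $\sigma(\Delta_{X_\infty})$ with the support of its spectral measure.
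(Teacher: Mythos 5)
Your proof is correct and follows essentially the same route as the paper, which deduces the theorem from the formula for the resolution of the identity in Theorem~4.6 via the identification of $\sigma(\Delta_{X_\infty})$ with the support of $E$. In fact the paper disposes of this in a single sentence ("by definition $E$ is supported on $\overline{\bigcup_i\sigma(\Delta_{X_i})}$"), and your argument supplies the detail it omits --- in particular the verification that $\bigcup_j\sigma(\Delta_{X_j}|_{V_j})=\bigcup_i\sigma(\Delta_{X_i})$, which is exactly what is needed to ensure that restricting each $E^i$ to $V_i$ loses no spectrum.
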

%%%%%%%%%%%%%%%%%%%%%%%%%%%%%%%%%%%%%%%%%%%%%%%%%%%%%%%%%%%%%%%%%%%%%%%%%%%%%%%%%%%%%%%%%%%%%%%%%%%%%%%%%%%%%%%%%%%%%%%%%%%%%%%%%%%%%%%%%%%%%%%%%%%%%%%%%%%%%%%%%%%%%%%%%%%%%%%%%%%%%%%%%%%%%%%%%%%%%%%%%%%%%%%

\section{An equivalent statement of Selberg's 1/4 conjecture}

Consider the modular group $SL(2,\Zbb)$ and its congruence subgroups. The congruence subgroup of level $n$ is
$\Gamma(n):=\{\gamma\in SL(2,\Zbb)\,|\,\gamma\equiv I\,(mod\,\,n)\}$. Let $\Hbb$
denote the upper half plane with the hyperbolic metric. Consider the modular
surfaces $X(n):=\Gamma(n)\backslash\Hbb$, $n\in\Nbb$. Each modular surface
$X(n)$ is a finite area, non-compact, hyperbolic surface. It is well known that
$X(n)$ is a complete hyperbolic surface for every $n>1$. See for example
\cite{Bergeron16}.

$\Gamma(k)$ is without elliptic elements for every $k>1$, \cite[Section
2.3]{Bergeron16}. So the fundamental group of $X(k)$ is without elliptic
elements for every $k>1$. Consider each $\Gamma(n)$ with $n>1$. Let $m$ be a
multiple of $n$. $\Gamma(m)$ is a finite index normal subgroup of $\Gamma(n)$.
$X(m)$ is an unbranched regular finite cover of $X(n)$. 

For each $k\in \Nbb$, let $\ell(k)\in \Nbb$ denote the least common multiple of
the integers from $2$ to $k$. Then $X(\ell(k))$ is a regular finite cover of
$X(2),\ldots,X(k)$. Therefore $\sigma(\Delta_{X_{\ell(k)}})$ contains
$\sigma(\Delta_{X_2}),\ldots,\sigma(\Delta_{X_k})$. 

The sequence $X(\ell(2))\leftarrow X(\ell(3))\leftarrow X(\ell(4))\leftarrow\cdots$ is a sequence of regular finite coverings of complete Riemannian manifolds. Let $X(\infty)$ denote the inverse limit of
$\{X(\ell(k))|k>1\}$.
Theorem 4.7 implies that $\sigma(\Delta_{X(\infty)})=\overline{\bigcup_{k>1}
\sigma(\Delta_{X_{\ell(k)}})}$. Notice that
\[\bigcup_{j>1}\sigma(\Delta_{X_j})=\bigcup_{k>1}\bigcup_{1<i\leq
k}\sigma(\Delta_{X_i})\subset \bigcup_{k>1}
\sigma(\Delta_{X_{\ell(k)}})\subset\bigcup_{j>1}\sigma(\Delta_{X_j}).\] 

So $\sigma(\Delta_{X(\infty)})$ is equal to $\overline{\bigcup_{k>1}
\sigma(\Delta_{X_{\ell(k)}})}=\overline{\bigcup_{j>1}\sigma(\Delta_{X_j})}$.

It is known that the first nonzero eigenvalue of $\Delta_{X_1}$ satisfies
$\lambda_1(\Gamma(1)\backslash\Hbb)>1/4$. See for example \cite[Theorem
3.38]{Bergeron16}. Thus the statement of Selberg's $1/4$ conjecture that the
first nonzero eigenvalue $\lambda_1(X(n))\geq 1/4$ for every integer $n$ is
equivalent
to the statement that $\lambda_1(X(n))\geq 1/4$ for every $n>1$. The latter can be turned into a statement about spectrum of $X(\infty)$. Theorem 4.7 implies the following:

\begin{coro}\label{corollary}
Selberg's 1/4 conjecture is true if and only if the spectrum of $\Delta_{X(\infty)}$ does not intersect $(0,1/4)$.
\end{coro}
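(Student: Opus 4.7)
The plan is to combine Theorem \ref{mainTheorem} with the standard spectral decomposition of a finite-area hyperbolic surface, so that Selberg's numerical bound translates directly into an exclusion statement about $\sigma(\Delta_{X(\infty)})$. Most of the work is already done in the paragraph preceding the corollary: applied to the sequence $X(\ell(2))\leftarrow X(\ell(3))\leftarrow\cdots$, Theorem \ref{mainTheorem} gives
\[
\sigma(\Delta_{X(\infty)})~=~\overline{\bigcup_{j>1}\sigma(\Delta_{X_j})}.
\]

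First I would recall that for each $n>1$, $X(n)=\Gamma(n)\backslash\Hbb$ is a complete, finite-area, non-compact hyperbolic surface, whose Laplace spectrum decomposes as $\{0\}\cup\sigma_d(\Delta_{X(n)})\cup[1/4,\infty)$, where $\sigma_d$ is the discrete set of $L^2$-eigenvalues coming from Maass cusp forms (and possibly residues) and $[1/4,\infty)$ is the absolutely continuous spectrum coming from the Eisenstein series. In particular, the inequality $\lambda_1(X(n))\ge 1/4$ is equivalent to $\sigma(\Delta_{X(n)})\cap(0,1/4)=\emptyset$. Combined with the already-noted bound $\lambda_1(SL_2(\Zbb)\backslash\Hbb)>1/4$ for the modular surface itself, Selberg's $1/4$ conjecture is equivalent to the statement that $\sigma(\Delta_{X(n)})\cap(0,1/4)=\emptyset$ for every $n>1$.

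Both directions of the biconditional are then essentially formal. For the forward direction, if Selberg's conjecture holds then $\bigcup_{j>1}\sigma(\Delta_{X_j})\subset\{0\}\cup[1/4,\infty)$; since the right-hand side is closed in $\Rbb$, taking closures preserves the inclusion and yields $\sigma(\Delta_{X(\infty)})\cap(0,1/4)=\emptyset$. For the converse, each individual $\sigma(\Delta_{X(n)})$ with $n>1$ sits inside $\bigcup_{j>1}\sigma(\Delta_{X_j})\subset\sigma(\Delta_{X(\infty)})$, so the hypothesis that $\sigma(\Delta_{X(\infty)})$ avoids $(0,1/4)$ transfers back to each $X(n)$ and delivers $\lambda_1(X(n))\ge 1/4$ for every $n>1$, hence for every $n\in\Nbb$.

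There is no genuinely hard step once Theorem \ref{mainTheorem} is in hand; the only subtlety to watch is that the closure operation in the main theorem cannot create new spectrum inside the open interval $(0,1/4)$, which is exactly guaranteed by the closedness of $\{0\}\cup[1/4,\infty)$. Beyond that the only ingredient is the classical spectral decomposition of $X(n)$, which is what lets the numerical bound $\lambda_1\ge 1/4$ be rephrased as the spectral exclusion needed to apply Theorem \ref{mainTheorem}.
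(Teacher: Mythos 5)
Your proposal is correct and follows essentially the same route as the paper: both reduce to the identity $\sigma(\Delta_{X(\infty)})=\overline{\bigcup_{j>1}\sigma(\Delta_{X_j})}$ from Theorem \ref{mainTheorem} together with the bound $\lambda_1(SL_2(\Zbb)\backslash\Hbb)>1/4$. You merely make explicit two details the paper leaves implicit --- that the continuous spectrum $[1/4,\infty)$ of each $X(n)$ lets you rephrase $\lambda_1(X(n))\ge 1/4$ as $\sigma(\Delta_{X(n)})\cap(0,1/4)=\emptyset$, and that the closedness of $\{0\}\cup[1/4,\infty)$ prevents the closure from creating spectrum in $(0,1/4)$ --- which is a welcome filling-in rather than a different argument.
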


%%%%%%%%%%%%%%%%%%%%%%%%%%%%%%%%%%%%%%%%%%%%

\end{document}